\documentclass[11pt, a4paper, oneside, reqno]{amsart}
\usepackage{makros}
\usepackage{graphicx}
\usepackage{amssymb,amsfonts,amsmath,amsthm}
\usepackage{stmaryrd,hyperref,yhmath}
\usepackage{color}
\allowdisplaybreaks
\numberwithin{equation}{section}
\newtheorem{result}[theorem]{Main result}
\newtheorem{remark}     [theorem]{Remark}

\begin{document}

\begin{abstract}
  We prove the existence of solitary waves in a lattice where all particles interact with each other by pair-wise repulsive forces that decay with distance.  The variational existence proof is based on constrained optimization and provides a one-parameter family of unimodal solutions. We also describe the asymptotic behavior of large, fast, high-energy waves.
\end{abstract}
\maketitle

\section{Introduction}

We consider infinite one-dimensional lattices of particles where each particle interacts with every  through pairwise forces. Our main focus is on the existence of solitary traveling wave solutions in generalized Fermi-Pasta-Ulam-Tsingou (FPUT) lattices where the dynamics are governed by the differential equations
\begin{align}
\label{eq:dynamic}
\ddot{x}_j = \suminf \big( \Phi'_m(x_{j+m}-x_j) - \Phi'_m(x_j-x_{j-m}) \big), \quad  j \in \Zset.
\end{align}
Here, $x_j \in \Rset$ denotes the position of particle $j$ with $x_j \leq x_{j+1}$ and $\Phi_m(r)$ describes the pairwise interaction potentials with the $m^{th}$ neighbor to the left and the right. A typical case are uniform lattices with  $\Phi_m(r)= r^{-\alpha}$ for suitable $\alpha > 0$.  We study solitary traveling waves which satisfy
\begin{align}
\label{eq:trav_ansatz}
x_j(t) = \nu j - X(j-ct),
\end{align}
where $\nu$ is a fixed parameter describing an equilibrium distribution of the lattice, $X$ is the wave profile and $c$ the wave speed.

There has been substantial recent interest in the study of traveling waves of long-range FPUT lattices \eqref{eq:dynamic}. Rigorous  small amplitude, long-wave solutions are given in  \cite{IP25} and \cite{AW25} through approximations with Benjamin-Ono and Korteweg–de Vries equations respectively. For initial data approximations by long-wave solutions of  Benjamin-Ono equations were studied in \cite{Wri24}. In this paper, we will prove the existence of traveling waves without restriction to small amplitudes for a wide class of long-range interactions.

\subsection{Reformulation}

We are setting $W = \dot{X}$, $\xi = j-ct$ and inserting the traveling wave ansatz \eqref{eq:trav_ansatz} into the dynamic equation \eqref{eq:dynamic} and we get
\begin{align}
\label{eq:wave_aux}
-c^2 \,\dot{W}(\xi) = \suminf \big( \Phi'_m(\nu m + X(\xi) - X(\xi + m)) - \Phi'_m(\nu m - X(\xi) + X(\xi - m) )\big).
\end{align}
Defining the operator $A_m$ by convolution with the characteristic function $\chi_{[-m/2,m/2]}$
\begin{align}
A_m f(\xi) := \chi_{[-m/2,m/2]} \ast f (\xi) =\int\limits_{-\frac{m}{2}}^{\frac{m}{2}} f(\xi+s) \, {\rm d} s,
\end{align}
we can express the terms in the sum in \eqref{eq:wave_aux} as
\begin{align}
    & \Phi'_m\big((\nu m + X(\xi) - X(\xi + m)\big) - \Phi'_m\big(\nu m - X(\xi) + X(\xi - m)\big)\nonumber \\
    &= \Phi'_m\big(\nu m - A_m W (\xi + \frac{m}{2})\big) - \Phi'_m\big(\nu m - A_m W (\xi - \frac{m}{2}) \big).
\end{align}
Integrating \eqref{eq:wave_aux} with respect to $\xi$ then yields
\begin{align}
\label{eq:wave_eq}
c^2 \, W(x) &= - \suminf \Big( \int_{-\infty}^x \Phi_m'\big(\nu m - A_m W(\xi + \frac{m}{2} ) \big) - \Phi_m'\big(\nu m - A_m W(\xi - \frac{m}{2} ) \big) \, {\rm d} \xi \Big) + \eta \nonumber \\
&= - \suminf \Big( \int_{x-m}^x \Phi_m'\big(\nu m - A_m W(\xi + \frac{m}{2} ) \big) \, {\rm d} \xi \Big) + \eta \nonumber \\
&= - \suminf \Big( \int_{-\frac{m}{2}}^{\frac{m}{2}} \Phi_m'\big(\nu m - A_m W(x + \xi ) \big) \, {\rm d} \xi \Big) + \eta\nonumber \\
&= \suminf \big( A_m (- \Phi_m'(\nu m - A_m W(x) ) ) \big)  + \eta,
\end{align}
due to the structure of the integrand and where $\eta$ is a suitable constant. We will prove the existence of solutions $(W,c)$ to \eqref{eq:wave_eq} to give the existence of traveling waves of speed $c$ via variational methods.

\subsection{Main Results}

Our approach requires certain properties on the potentials and the norm of  $W$:

{\bf Assumption on $W$:} We will construct our solution by a constrained optimization problem  with a constraint $\|W \|_2^2 = 2K$,  where
\begin{align}
\label{eq:Assumption_K}
0 < K < \frac{\nu^2}{2}
\end{align}
is required.  We note that this implies, with Young's convolution inequality, see Lemma \ref{le:AmW} below,
\begin{align}
    |X(\xi)-X(\xi-m)| = |A_m W|(\xi-\frac{m}{2})  \leq \sqrt{m} \sqrt{2K} < \sqrt{m} \nu.
\end{align}
Hence, we have $|X(\xi)-X(\xi-m)| < \nu m $ and the expressions in \eqref{eq:wave_aux} are well-defined for typical potentials like $r^{-\alpha}$. Indeed, this implies $x_j < x_{j+1}$ for the traveling wave solutions.

{\bf Assumptions on $\Phi_m$:} For every $m \in \Nset$, the potentials $\Phi_m: [0,\infty) \to [0,\infty)$ are  in $C^4([0,\infty))$ and satisfy
\begin{align}
\Phi_m(s) \geq 0, \quad \Phi_m'(s) \leq 0, \quad \Phi_m''(s) \geq 0, \quad \Phi_m'''(s) \leq 0, \quad \Phi_m^{(4)}(s) \geq 0 \label{eqn:assPhi}
\end{align}
for all $s \geq 0$.  We will require that these inequalities are strict for $m=1$ and $s>0$. Furthermore, we assume the global bounds
\begin{subequations}
\begin{align}
\label{eq:Phi_convergence}
&\suminf \Phi_m'(\nu m - \sqrt{2Km}) m < \infty, \\
&\suminf \Phi_m''(\nu m - \sqrt{2Km}) m^2 < \infty, \label{eq:Phi_convergence2} \\
&\suminf \Phi_m''(\nu m) m^\gamma <  \infty,  \label{eq:Phi_convergence3} \\ &\suminf \Phi_m'''(\nu m - \sqrt{2Km} )m^{\frac32} < \infty  \label{eq:Phi_convergence4}
\end{align}
\end{subequations}
where $\gamma\in(5/2, 3)$ is fixed. Here, we use \eqref{eq:Assumption_K} to ensure $\nu m - \sqrt{2Km} \geq 0$.

\begin{lemma}
Let   $\Phi_m(r) = r^{-\alpha}$ with $\alpha > \frac32$, then $\Phi_m$ satisfies the assumptions on $\Phi_m$. \label{lem:ralpha}
\end{lemma}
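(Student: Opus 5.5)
The plan is to verify the sign conditions \eqref{eqn:assPhi} by differentiating $r^{-\alpha}$ directly, and then to check the four summability conditions \eqref{eq:Phi_convergence}--\eqref{eq:Phi_convergence4} by comparing each series with a $p$-series. One caveat concerns regularity. Since $r^{-\alpha}$ is singular at $r=0$, the condition $\Phi_m\in C^4([0,\infty))$ has to be read on $(0,\infty)$. This is harmless, because every argument that actually enters the sums, and the dynamics via \eqref{eq:Assumption_K}, is strictly positive; I will make this explicit in the second step.

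\textbf{Signs.} For $r>0$ we have
\begin{align*}
\Phi_m'(r)&=-\alpha r^{-\alpha-1}, & \Phi_m''(r)&=\alpha(\alpha+1)r^{-\alpha-2},\\
\Phi_m'''(r)&=-\alpha(\alpha+1)(\alpha+2)r^{-\alpha-3}, & \Phi_m^{(4)}(r)&=\alpha(\alpha+1)(\alpha+2)(\alpha+3)r^{-\alpha-4}.
\end{align*}
Since $\alpha>0$, these alternate in sign exactly as required in \eqref{eqn:assPhi}, and the inequalities are strict for every $m$, in particular for $m=1$. Also $\Phi_m\ge 0$.

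\textbf{A uniform linear lower bound on the arguments.} Set $\delta:=\nu-\sqrt{2K}$. Then $\delta>0$ by \eqref{eq:Assumption_K}. Using $\sqrt m\le m$ for $m\ge1$,
\[
\nu m-\sqrt{2Km}\;\ge\;\nu m-\sqrt{2K}\,m=\delta m>0 .
\]
Because $|\Phi_m^{(k)}|$ is decreasing in $r$, this gives the comparison bounds
\[
|\Phi_m'(\nu m-\sqrt{2Km})|\,m\le \alpha\delta^{-\alpha-1}m^{-\alpha},\qquad
\Phi_m''(\nu m-\sqrt{2Km})\,m^2\le C\,m^{-\alpha},
\]
\[
|\Phi_m'''(\nu m-\sqrt{2Km})|\,m^{3/2}\le C\,m^{-\alpha-3/2},\qquad
\Phi_m''(\nu m)\,m^{\gamma}=C'\,m^{\gamma-\alpha-2}.
\]
The first three series therefore converge as soon as $\alpha>1$. (Condition \eqref{eq:Phi_convergence} is understood with absolute values, since $\Phi_m'\le0$; likewise \eqref{eq:Phi_convergence4}.)

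\textbf{The binding condition.} The main point is \eqref{eq:Phi_convergence3}, which requires $\gamma-\alpha-2<-1$, i.e.\ $\gamma<\alpha+1$. Since $\gamma$ must lie in $(5/2,3)$, this is achievable precisely when $\alpha+1>5/2$, i.e.\ $\alpha>3/2$. This explains the hypothesis of the lemma. Given $\alpha>3/2$, I would fix any
\[
\gamma\in\bigl(5/2,\ \min\{3,\alpha+1\}\bigr),
\]
which is a nonempty interval, and then \eqref{eq:Phi_convergence3} holds. I expect this to be the only step needing care: $\gamma$ is not arbitrary in $(5/2,3)$ but has to be chosen depending on $\alpha$. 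I would state this dependence explicitly in the proof, noting that the assumptions only require the existence of some fixed $\gamma$ in that range.
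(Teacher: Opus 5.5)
Your proposal is correct and is the direct calculation the paper has in mind; the paper's entire proof is the sentence ``This is a direct calculation.'' Two of your points are details the paper leaves implicit: that \eqref{eq:Phi_convergence3} forces $\gamma$ to be chosen in $(5/2,\min\{3,\alpha+1\})$, which is exactly where $\alpha>3/2$ enters, and that the $C^4$ regularity can only hold on $(0,\infty)$.
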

\begin{proof} This is a direct calculation.
\end{proof}

\begin{result}\label{THM}
    Under the assumptions on $\Phi_m$ and $K$, there exists a one-parameter family of solutions $(W_K,c_K)$ of \eqref{eq:wave_eq} with $\|W_K\|^2_2= 2K$.
\end{result}

\begin{remark}
    \begin{enumerate}
        \item The result can be expressed for the specific choice of the typical uniform lattices by using Lemma \ref{lem:ralpha}, i.e. for given $\nu>0$ and $\alpha>3/2$, there exists a family of  traveling waves of \eqref{eq:dynamic} for $\Phi_m(r)=r^{-\alpha}$
    with $\|W\|_2^2=2K$ for each  $K$ with $0<K<\nu^2/2$. The result also implies an existence result for lattices with repulsive finite-range forces, i.e. $\Phi_m \equiv 0$ for $m \geq m_0$ for some fixed $m_0$.
\item The high-energy limit for $K \to \nu^2/2$ is described in detail below for the potential $r^{-\alpha}$. The behavior is analogous to the standard FPUT case as analysed e.g. in \cite{HM15}. In  section 3, we establish that $W_K$ converges to an indicator function and that $c_K \to \infty$.
\item For $K \to 0$, it can be expected that the  traveling waves approach the long-wave solutions as given in  \cite{IP25} and \cite{AW25}, the rigorous limit in our variational setting is left out for reasons of brevity. A parallel, entirely variational analysis is done in \cite{Herrmann2020} for general related convolution operators for a long-wave limit approaching the KdV equation.

    \end{enumerate}
\end{remark}

\subsection{Further related results}

Early work on variational proofs for the existence of traveling waves are \cite{FW94} and \cite{FV99}. There is a comprehensive overview on traveling waves in FPUT  including models with next-nearest neighbors interactions by Vainchtein,
\cite{Vai22}. Finite-range interactions, i.e. $\Phi_m \equiv 0$ for $m \geq m_0$ for some fixed $m_0$ were studied via a long-wave KdV approach in \cite{HML16}, which was adapted to the infinite range case in \cite{IP25} and \cite{AW25} to get small amplitude, long-wave traveling waves.  We note that Ingimarson and Pego (\cite{IP25}) show the existence of traveling  waves for the setting of Lemma \ref{lem:ralpha} for the  range of parameters $4/3< \alpha<3$. They encounter different qualitative behavior for those with $\alpha \in (4/3,3/2)$ in that the Hamiltonian energy of the waves is not monotonically increasing in the wave speed, this is  in contrast to what can be expected for waves constructed by our variational approach for small $K$.

For finite-range interaction variational methods gave the existence of traveling waves of finite size in \cite{HM19a} and \cite{Pan24}.  Our approach is using a variational principle similar to  the existence proof for FPUT traveling waves in \cite{Her10}.  Similar adaptions of \cite{Her10} were made in \cite{HM19a} for peridynamical media (and finite-range FPUT) and in \cite{Herrmann2020} to a general convolution setting, see these papers also for discussions of other variational methods to construct solutions of FPUT.

\subsection*{Plan of paper} We introduce the variational principle in subsection \ref{subsec:2.1} on a suitable cone of unimodal functions. The existence of maximizers is shown in subsection \ref{sub:2.2} using a concentration compactness argument. The proof of the main theorem is given in subsection \ref{sub:2.3}. In section \ref{sec:HE} we discuss the asymptotic behavior for high-energy waves, i.e. for $K \to \nu^2/2$.

\section{Construction of Traveling Wave Solutions}

The proof of existence follows the variational approach  of \cite{Herrmann2020} where nonlinear, nonlocal eigenvalue problems of a form that include nearest-neighbor FPUT chains are analyzed, similar methods were adapted in \cite{HM19a} for peridynamical media which contained the more general FPUT case $\Phi_1=\Phi_2=\ldots=\Phi_M$ and $\Phi_j=0 $ for $j > M$. \\
While the overall structure of the argument follows the approach of \cite{Herrmann2020}, the present setting introduces substantial new analytical difficulties. In contrast to the finite-range interactions considered there, each particle in our model interacts with every other particle. As a consequence, several steps of the variational analysis require nontrivial modifications. In particular, it becomes necessary to establish the well-posedness of certain infinite series, such as the series $\mathcal{P}$ defined in the next subsection, a question that does not arise in \cite{Herrmann2020} or related finite-range models.
While some proofs of lemmas only require minor modifications, we provide all proofs to keep the exposition self-contained.

\subsection{Variational Setting}
\label{subsec:2.1}

 Following the variational method that is described in \cite{Herrmann2020}, we first define the potential and kinetic energy of $W$ by
\begin{align}
\Pcal(W) := \intR \suminf \Psi_m ( A_m W(s)) \, {\rm d}s, \qquad \Kcal(W) := \frac12 \intR \big( W(s) \big)^2 \, {\rm d}s,
\end{align}
respectively. Here, $\Psi_m :[0,\nu m) \to [0,\infty)$ is obtained by correcting the first two terms of the Taylor approximation of $\Phi_m$ around $\nu m$ and is given by
\begin{align}\label{def:psi}
\Psi_m(r) :=  \Phi_m(\nu m - r) - \Phi_m(\nu m) + \Phi_m'(\nu m)r.
\end{align}
We constrain our optimization to cones. Here, $\Ccal$ is  the convex cone given by all square integrable, even, unimodal and nonnegative functions, i.e.
\begin{align}
\Ccal := \overline{\{ W \in C_c^\infty(\Rset) \mid W(x) = W(-x) \geq 0 \quad \text{and} \quad \dot{W}(x) = -\dot{W}(-x) \leq 0 \quad \text{for all } x \geq 0 \}},
\end{align}
where the overline denote the $\mathsf{L}^2(\Rset)$ closure. Due  to convexity, $\Ccal$ is also closed under weak convergence in $\mathsf{L}^2(\Rset)$.
Then for every given $0 < K < \frac{\nu^2}{2}$ we define the cone
\begin{align}
\CK := \{W \in \Ccal \mid \Kcal(W) = K\}.
\end{align}

The general idea is to find a converging sequence $(W_n)_{n\in\Nset}$ of functions $W_n \in \CK$ that maximizes the potential energy such that its limit $W_{\infty}$ is a solution to the traveling wave problem. First, we prove some auxiliary results and ensure that the potential energy $\Pcal$ is indeed well-defined. We observe some further global bounds.

\begin{remark}
   The bound in \eqref{eq:Phi_convergence} implies with $\Phi_m' \leq 0$ and $\Phi_m'' \geq 0$ the global estimate
\begin{align}
\label{Phi'm}
    \suminf \Phi_m'(\nu m) m < \infty.
\end{align}
\end{remark}
The corrected potentials $\Psi_m$ as in \eqref{def:psi} have a minimum at $r=0$. We collect further properties in the next Lemma.
\begin{lemma}
\label{le:psi_aux}
For all $m \in \Nset$, the potential $\Psi_m$ satisfies
\begin{align}
\Psi_m(0) = \Psi_m'(0) = 0, \quad \Psi_m''(0) = \Phi_m''(\nu m) \geq 0
\end{align}
and
\begin{align}
\Psi_m(r) & \geq 0, \\
\Psi_m'(r) &= -\Phi_m'(\nu m - r) + \Phi'(\nu m) \geq 0, \\
\Psi_m''(r) &= \Phi_m''(\nu m - r) \geq 0
\end{align}
for all $r \in [0,\nu m)$.
\end{lemma}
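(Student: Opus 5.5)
The plan is to differentiate the definition \eqref{def:psi} directly and then read off signs from the monotonicity assumptions \eqref{eqn:assPhi}. For $r \in [0,\nu m)$ the argument $\nu m - r$ lies in $(0,\nu m]\subset[0,\infty)$, where $\Phi_m$ is $C^4$. So $\Psi_m$ is $C^4$ on $[0,\nu m)$, and the chain rule gives
\begin{align*}
\Psi_m'(r) = -\Phi_m'(\nu m - r) + \Phi_m'(\nu m), \qquad \Psi_m''(r) = \Phi_m''(\nu m - r).
\end{align*}
Evaluating at $r=0$ gives $\Psi_m(0) = \Phi_m(\nu m) - \Phi_m(\nu m) + 0 = 0$ and $\Psi_m'(0) = 0$. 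It also gives $\Psi_m''(0) = \Phi_m''(\nu m)$, which is nonnegative since $\Phi_m''\ge 0$ on $[0,\infty)$.

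The sign statements then follow in order from the second derivative downwards.
\begin{enumerate}
\item $\Psi_m''(r) = \Phi_m''(\nu m - r) \ge 0$ holds directly by \eqref{eqn:assPhi}.
\item Since $\Phi_m''\ge 0$, the function $\Phi_m'$ is nondecreasing. For $\nu m - r \le \nu m$ this gives $\Phi_m'(\nu m - r) \le \Phi_m'(\nu m)$, i.e.\ $\Psi_m'(r)\ge 0$. Equivalently, $\Psi_m'$ is nondecreasing with $\Psi_m'(0)=0$.
\item Finally, $\Psi_m(r) = \int_0^r \Psi_m'(s)\,{\rm d}s \ge 0$, because $\Psi_m(0)=0$ and $\Psi_m'\ge 0$. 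Alternatively, one can note that $\Psi_m$ is convex with a horizontal tangent at $0$.
\end{enumerate}

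No step is a genuine obstacle; the only point to watch is domain bookkeeping. One must check that $\nu m - r$ stays in the domain $[0,\infty)$ of $\Phi_m$, which the restriction $r<\nu m$ guarantees. One must also note that the statement's $\Phi'(\nu m)$ is to be read as $\Phi_m'(\nu m)$. Only the assumptions $\Phi_m''\ge0$ and smoothness are used; the higher-order sign conditions in \eqref{eqn:assPhi} are not needed here.
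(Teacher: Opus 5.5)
Your proof is correct and takes the same route as the paper: you differentiate the definition of $\Psi_m$ and read off the signs from $\Phi_m''\ge 0$, with $r<\nu m$ keeping $\nu m-r$ inside the domain of $\Phi_m$. The only difference is cosmetic. The paper obtains $\Psi_m\ge 0$ from the Lagrange remainder, $\Psi_m(r)=\Phi_m''(\nu m-\theta r)\,r^2/2$ with $\theta\in(0,1)$, a representation it reuses later to bound the potential energy, whereas you integrate $\Psi_m'\ge 0$ from $0$, which is equally valid.
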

\begin{proof}
These are direct consequences of the definition of $\Psi_m$ and the assumptions on $\Phi_m$ and its derivatives, e.g. a Taylor expanding of $\Phi_m$ around $\nu m$ reveals
\begin{align}
\Phi_m(\nu m -r) = \Phi_m(\nu m) - \Phi_m'(\nu m)r + \Phi_m''(\nu m - \theta r)\frac{r^2}{2!}
\end{align}
with $\theta \in (0,1)$, which implies
\begin{align}
\label{eq:Psi_m positive}
\Psi_m(r) = \Phi_m''(\nu m - \theta r)\frac{r^2}{2!} \geq 0.
\end{align}
\end{proof}
Next we state properties of the convolution operator applied to elements of the cone $\CK$.
\begin{lemma}
\label{le:AmW} Let $W \in \CK$ and  $m \in \Nset$,  then
$0 \leq A_m W(s) \leq \sqrt{2Km} $  for all $s \in \Rset$.
\end{lemma}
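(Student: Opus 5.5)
The lower bound comes from nonnegativity and the upper bound from Cauchy--Schwarz. The one point needing care is that elements of $\CK$ are only $\mathsf{L}^2$-limits of smooth functions, so pointwise statements have to be justified.

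First, fix $W\in\CK$ and $s\in\Rset$. Since $W\in\Ccal$, it is an $\mathsf{L}^2(\Rset)$-limit of nonnegative functions $W_n\in C_c^\infty(\Rset)$. Convergence in $\mathsf{L}^2$ gives a subsequence converging almost everywhere, so $W\ge 0$ almost everywhere. The integral
\begin{align*}
A_mW(s)=\int_{-m/2}^{m/2}W(s+\sigma)\,{\rm d}\sigma
\end{align*}
is well defined for every $s$, since $W\in\mathsf{L}^2\subset\mathsf{L}^1_{\rm loc}$, and it is nonnegative. In fact $A_mW(s)=\langle \chi_{[s-m/2,s+m/2]},W\rangle_{\mathsf{L}^2}$ is a continuous linear functional of $W$. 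Hence the bounds may also be checked on the approximants $W_n$ and passed to the limit, which avoids any issue with representatives.

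For the upper bound, apply Cauchy--Schwarz on the interval $[s-m/2,s+m/2]$ of length $m$:
\begin{align*}
A_mW(s)\le \|\chi_{[-m/2,m/2]}\|_2\,\|W\|_2=\sqrt{m}\,\sqrt{2\Kcal(W)}=\sqrt{2Km},
\end{align*}
using $\Kcal(W)=\tfrac12\|W\|_2^2=K$. Equivalently, this is Young's convolution inequality $\|\chi\ast W\|_\infty\le\|\chi\|_2\|W\|_2$, as referred to in the introduction. I do not expect a real obstacle here.

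It is worth noting that combining the upper bound with \eqref{eq:Assumption_K} gives $\sqrt{2Km}<\nu\sqrt{m}\le\nu m$. So $A_mW(s)$ lies in the domain $[0,\nu m)$ of $\Psi_m$, which is how the lemma will be used when showing that $\Pcal$ is well defined.
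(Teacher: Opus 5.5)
Your proof is correct and follows the paper's argument: nonnegativity of $W$ gives $A_mW\ge 0$, and Cauchy--Schwarz (Young's inequality with $\|\chi_{[-m/2,m/2]}\|_2=\sqrt{m}$ and $\|W\|_2=\sqrt{2K}$) gives the upper bound. Your extra care about $W\ge 0$ holding only almost everywhere and about $A_mW(s)$ being a continuous linear functional of $W$ is a welcome refinement that the paper omits.
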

\begin{proof} The non-negativity of $A_m W(s)$ is trivial due to the definition of $A_m$ and $W$ being nonnegative. Young's  convolution inequality yields
\begin{align}
A_m W(s) \leq \|A_m W\|_{\infty} = \| \chi_{[-m/2,m/2]} \ast W \|_{\infty} \leq \| \chi_{[-m/2,m/2]} \|_2 \| W \|_{2} = \sqrt{m} \sqrt{2K}.
\end{align}
\end{proof}
Now we are in the position to show that $\Pcal(W)$ is finite for $W \in \CK$.
\begin{lemma}
\label{le: P_well_def}
The potential energy $\Pcal$ is well-defined for all $W \in \CK$.
\end{lemma}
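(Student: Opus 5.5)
We need $\Pcal(W)<\infty$ for $W\in\CK$. The plan is to bound each $\Psi_m(A_mW)$ pointwise by a quadratic in $A_mW$, then integrate using $\|A_mW\|_2^2\le m^2\|W\|_2^2$, and finally sum over $m$ using the summability assumptions.

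\textbf{Pointwise bound.} By Lemma \ref{le:AmW} we have $0\le A_mW(s)\le\sqrt{2Km}<\nu m$, so $A_mW(s)$ lies in the domain of $\Psi_m$ and every term is well defined. By the Taylor representation \eqref{eq:Psi_m positive},
\begin{align*}
0\le \Psi_m(A_mW(s)) = \tfrac12 \Phi_m''\big(\nu m-\theta A_mW(s)\big)\,\big(A_mW(s)\big)^2 .
\end{align*}
The argument $\nu m-\theta A_mW(s)$ lies in $[\nu m-\sqrt{2Km},\nu m]$. Since $\Phi_m'''\le0$, the function $\Phi_m''$ is nonincreasing, so on this interval $\Phi_m''$ is at most $\Phi_m''(\nu m-\sqrt{2Km})$. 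Hence
\begin{align*}
0\le\Psi_m(A_mW(s))\le \tfrac12\Phi_m''(\nu m-\sqrt{2Km})\,(A_mW(s))^2 .
\end{align*}

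\textbf{Integration.} Young's inequality in the form $\|\chi_{[-m/2,m/2]}\ast W\|_2\le\|\chi_{[-m/2,m/2]}\|_1\|W\|_2$ gives $\|A_mW\|_2^2\le m^2\cdot 2K$. Integrating the pointwise bound therefore yields
\begin{align*}
0\le\intR\Psi_m(A_mW(s))\,{\rm d}s\le K\,m^2\,\Phi_m''(\nu m-\sqrt{2Km}).
\end{align*}

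\textbf{Summation.} All terms are nonnegative, so Tonelli's theorem lets us interchange sum and integral. We then obtain
\begin{align*}
0\le\Pcal(W)\le K\suminf \Phi_m''(\nu m-\sqrt{2Km})\,m^2,
\end{align*}
and the right-hand side is finite by \eqref{eq:Phi_convergence2}. Measurability is clear because $A_mW$ is continuous and $\Psi_m$ is continuous on $[0,\nu m)$.

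\textbf{Main obstacle.} The only delicate point is the uniform control of $\Phi_m''$ at the intermediate point of the Taylor expansion. This is exactly why the monotonicity of $\Phi_m''$ and the shifted argument $\nu m-\sqrt{2Km}$ in assumption \eqref{eq:Phi_convergence2} are needed. If one preferred to use the first-derivative assumption instead, one could bound $\Psi_m(r)\le\Psi_m'(r)\,r$. That route would require an $L^1$ bound on $A_mW$, which is unavailable for $W\in\mathsf{L}^2$, so the $\Phi_m''$ route is the one to take.
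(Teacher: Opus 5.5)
Your proof is correct and follows the paper's argument essentially step by step: the Taylor bound $\Psi_m(A_mW)\le\frac12\Phi_m''(\nu m-\sqrt{2Km})(A_mW)^2$, the estimate $\|A_mW\|_2^2\le 2Km^2$, and summation via \eqref{eq:Phi_convergence2}. The only differences are minor: you cite Young's inequality for the $L^2$ estimate where the paper carries out H\"older plus Fubini by hand, and your monotonicity claim ($\Phi_m''$ nonincreasing since $\Phi_m'''\le 0$) is the accurate form of the paper's remark, which says "increasing".
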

\begin{proof} Lemma \ref{le:psi_aux} and \ref{le:AmW} imply for some $\theta \in (0,1)$
\begin{align}
\label{eq:Psi_m approx Phi''}
0 \leq  \Psi_m(A_mW(s)) =  \Phi_m(\nu m - \theta A_m W)\frac{(A_m W(s))^2}{2} \leq \Phi_m''\big(\nu m - \sqrt{2Km}\big) \frac{(A_mW(s))^2}{2},
\end{align}
using in the last step that $\Phi''$ is monotonically increasing. Note that the expressions $\Phi_m''(\nu m - \sqrt{2Km})$ are well-defined for all $m \in \Nset$ due to the condition $K < \frac{\nu^2}{2}$. With \mbox{$C_{m,K} := \frac12 \Phi_m''\big(\nu m - \sqrt{2Km}\big)$} and H\"older's inequality we then get
\begin{align}
\label{eq:int Psi_m}
\intR  \Psi_m(A_mW(s)) \, {\rm d}s &\leq C_{m,K} \intR ( A_mW(s) )^2 \,{\rm d}s 
= C_{m,K} \intR \Big[ \int\limits_{-\frac{m}{2}}^{\frac{m}{2}} W(s+\tau) \, {\rm d}\tau \Big]^2 \,{\rm d}s \nonumber \\
&\leq C_{m,K} \intR \Big[m \, \int\limits_{-\frac{m}{2}}^{\frac{m}{2}} W^2(s+\tau) \, {\rm d}\tau \Big] \,{\rm d}s  
= m \, C_{m,K} \int\limits_{-\frac{m}{2}}^{\frac{m}{2}} \intR W^2(s+\tau) \, {\rm d}s \,{\rm d}\tau \nonumber \\
&= m \, C_{m,K} \int\limits_{-\frac{m}{2}}^{\frac{m}{2}} 2K \,{\rm d}\tau 
= 2K m^2 C_{m,K}.
\end{align}
Substituting the results into the formula for $\Pcal$ and using Fubini's theorem we get
\begin{align}
\Pcal(W) &=  \intR \suminf \Psi_m ( A_m W(s)) \, {\rm d}s 
= \suminf \intR \Psi_m ( A_m W(s)) \, {\rm d}s  \nonumber\\
&\leq \suminf 2K m^2 C_{m,K} 
= \suminf K m^2 \Phi_m''(\nu m - \sqrt{2Km}) < \infty,
\end{align}
where the last step follows by assumption \eqref{eq:Phi_convergence2}.
\end{proof}

We will consider a maximizing sequence $(W_n)_{n\in\Nset}$. Such sequences can be constructed using an improvement operator
\begin{align}
\Tcal(W) := \mu(W)\partial\Pcal(W)
\end{align}
with
\begin{align}
\mu(W) := \frac{\|W\|_2}{\|\partial \Pcal(W)\|_2} \quad \text{and} \quad \partial \Pcal(W) = \suminf A_m \Psi_m'(A_m W).
\end{align}

We will not use an explicitly constructed sequence, but invariance properties of the cone $\CK$ will be relevant.

\begin{lemma}
The gradient $\partial\Pcal$ is well-defined for all $W \in \CK$.
\end{lemma}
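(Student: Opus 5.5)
To show that $\partial\Pcal(W)=\suminf A_m\Psi_m'(A_mW)$ is well-defined for $W\in\CK$, we prove that the series converges absolutely in $\mathsf{L}^2(\Rset)$. It then defines an element of $\mathsf{L}^2$, and $\mu(W)$ and $\Tcal(W)$ make sense whenever $\partial\Pcal(W)\neq 0$. The strategy mirrors Lemma \ref{le: P_well_def}: we bound each summand linearly in $A_mW$ using the mean value theorem and Lemma \ref{le:AmW}, and then sum the resulting bounds with one of the global assumptions on $\Phi_m$.

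\emph{Pointwise bound.} By Lemma \ref{le:psi_aux}, $\Psi_m'(0)=0$ and $\Psi_m''(r)=\Phi_m''(\nu m-r)$. The mean value theorem therefore gives $0\le \Psi_m'(r)=\Phi_m''(\nu m-\theta r)\,r$ for some $\theta\in(0,1)$. Lemma \ref{le:AmW} gives $0\le A_mW\le\sqrt{2Km}<\nu m$. Since $\Phi_m''$ is decreasing (because $\Phi_m'''\le 0$), we obtain
\begin{align*}
0\le \Psi_m'(A_mW(s))\le \Phi_m''(\nu m-\sqrt{2Km})\,A_mW(s) = 2C_{m,K}\,A_mW(s)
\end{align*}
for all $s\in\Rset$. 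All arguments lie in $[0,\nu m)$, so every term is finite and measurable.

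\emph{$\mathsf{L}^2$ bound on each term.} Young's inequality with $\|\chi_{[-m/2,m/2]}\|_1=m$ gives $\|A_mf\|_2\le m\|f\|_2$. Applying this twice yields
\begin{align*}
\|A_m\Psi_m'(A_mW)\|_2\le m\,\|\Psi_m'(A_mW)\|_2\le 2C_{m,K}\,m\,\|A_mW\|_2\le 2C_{m,K}\,m^2\sqrt{2K}.
\end{align*}
Summing over $m$ and using \eqref{eq:Phi_convergence2}, we get
\begin{align*}
\suminf\|A_m\Psi_m'(A_mW)\|_2\le \sqrt{2K}\suminf \Phi_m''(\nu m-\sqrt{2Km})\,m^2<\infty.
\end{align*}
Hence the series converges absolutely in $\mathsf{L}^2$. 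All terms are nonnegative, so it also converges pointwise almost everywhere by monotone convergence, and the two limits coincide.

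If one additionally wants $\partial\Pcal(W)$ to be the actual Gâteaux derivative of $\Pcal$, the plan is to differentiate $\intR\Psi_m(A_m(W+\varepsilon V))\,{\rm d}s$ under the integral and the sum. The justification is dominated convergence, using the same summable majorant $C_{m,K}m^2$ together with self-adjointness of $A_m$. The only genuine obstacle is that $A_m(W+\varepsilon V)$ may leave $[0,\sqrt{2Km}]$. One handles this by restricting to $\varepsilon$ small so that $\sqrt{2Km}$ is replaced by $\sqrt{2K'm}$ with $K<K'<\nu^2/2$; this uniformity in $m$ is the delicate point. For mere well-definedness, however, the estimate above via \eqref{eq:Phi_convergence2} suffices.
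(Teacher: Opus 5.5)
Your proof is correct and follows the paper's argument. Both rest on the same per-term bound $\|\Psi_m'(A_mW)\|_2\le\Phi_m''(\nu m-\sqrt{2Km})\sqrt{2K}\,m$, the estimate $\|A_mf\|_2\le m\|f\|_2$, and the summability assumption \eqref{eq:Phi_convergence2}. The only difference is cosmetic: the paper sums by duality, bounding $\langle\partial\Pcal(W),V\rangle\le C\|V\|_2$ for $V\in\mathsf{L}^2$, whereas you sum the $\mathsf{L}^2$ norms directly and get absolute convergence of the series, which is slightly cleaner.
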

\begin{proof}
    Let $V \in \mathsf{L}^2$ be fixed. Analogous to the computations in \eqref{eq:int Psi_m} follows
    \begin{align}
        \|A_m V\|_2 \leq m \| V \|_2.
    \end{align}
    In addition, a Taylor expansion of $\Phi_m'$ and Young's convolution inequality yield the estimate
    \begin{align}
        \|\Psi_m'&(A_m W) \|_2 = \Big( \intR \big( \Psi_m'(A_m W(s) ) \big)^2 \, {\rm d} s \Big)^{\frac12} \nonumber \\
        &= \Big( \intR \big( \Phi_m'(\nu m) - \Phi_m'(\nu m - A_m W(s) ) \big)^2 \, {\rm d} s \Big)^{\frac12} \nonumber \\
        &\leq  \Big( \intR \big( \Phi_m''(\nu m - \sqrt{2Km} ) (A_m W(s) )^2  {\rm d} s \Big)^{\frac12} \nonumber \\
        &= \Phi_m''(\nu m - \sqrt{2Km} ) \|A_m W\|_2 \leq \Phi_m''(\nu m- \sqrt{2Km}) \sqrt{2K} m.
    \end{align}
    In summary, using the Cauchy-Schwarz inequality we obtain
    \begin{align}
        \langle \partial &\Pcal(W), V \rangle = \suminf \langle A_m \Psi_m'(A_m W), V \rangle \\
        &= \suminf \langle \Psi_m'(A_m W), A_m V \rangle \leq \suminf \|\Psi_m'(A_m W) \|_2 \|A_m V\|_2 \nonumber \\
        &\leq \suminf \sqrt{2K} \|V\|_2 \Phi_m''(\nu m - \sqrt{2Km}) m^2 \leq C \|V\|_2
    \end{align}
    by \eqref{eq:Phi_convergence2}. Hence $\partial \Pcal(W) \in \mathsf{L}^2$ as required.
\end{proof}

\begin{lemma}
\label{le:T_aux} Let
$W \in \CK$, then $\Tcal(W) \in \CK$.
\end{lemma}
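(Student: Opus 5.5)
The plan is to check separately that $\Tcal(W)$ lies in the cone $\Ccal$ and that $\Kcal(\Tcal(W))=K$.

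The norm condition is immediate from the normalization. Since $\Tcal(W)=\mu(W)\partial\Pcal(W)$ with $\mu(W)=\|W\|_2/\|\partial\Pcal(W)\|_2$, we get $\|\Tcal(W)\|_2=\|W\|_2=\sqrt{2K}$. For this to make sense we need $\partial\Pcal(W)\neq 0$. Because $W\geq0$ is nonzero, $A_1W>0$ on a set of positive measure. Strict monotonicity of $\Phi_1'$ for $m=1$ then gives $\Psi_1'(A_1W)>0$ there. Since all summands $A_m\Psi_m'(A_mW)$ are nonnegative by Lemma~\ref{le:psi_aux}, we conclude $\partial\Pcal(W)\gneq0$. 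The series converges in $\mathsf{L}^2$ by the preceding lemma, so its value is a well-defined element of $\mathsf{L}^2$.

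The cone property $\partial\Pcal(W)\in\Ccal$ rests on a few structural facts, each preserved under $\mathsf{L}^2$-limits and countable nonnegative sums:
\begin{enumerate}
\item $A_m$ maps $\Ccal$ into even, nonnegative, unimodal functions. For smooth compactly supported $W$ in the generating set,
\[
\tfrac{d}{dx}A_mW(x)=W(x+\tfrac m2)-W(x-\tfrac m2),
\]
which for $x\ge0$ is $\le0$. This holds because $|x+\tfrac m2|\ge|x-\tfrac m2|$ and $W$ is even and nonincreasing in $|x|$. Evenness and nonnegativity are clear.
\item If $f$ is even, nonnegative and unimodal with values in $[0,\nu m)$, then $\Psi_m'(f)$ is also even, nonnegative and unimodal. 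This follows because $\Psi_m'$ is nondecreasing on $[0,\nu m)$ with $\Psi_m'(0)=0$ (Lemma~\ref{le:psi_aux}), and the range condition holds by Lemma~\ref{le:AmW}.
\item Applying $A_m$ once more preserves these properties, so each summand $A_m\Psi_m'(A_mW)$ is even, nonnegative and unimodal.
\end{enumerate}
Finite sums of such functions stay in this class, and since $\Ccal$ is a closed convex cone, $\mathsf{L}^2$-convergence of the series gives $\partial\Pcal(W)\in\Ccal$. Multiplying by $\mu(W)>0$ keeps it in $\Ccal$.

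The main obstacle is technical: $\Ccal$ is defined as the closure of smooth compactly supported functions, while $\Psi_m'(A_mW)$ is neither smooth nor compactly supported in general. I would therefore first characterize $\Ccal$ as exactly the $\mathsf{L}^2$ functions that are almost everywhere equal to an even, nonnegative function that is nonincreasing on $[0,\infty)$. One inclusion holds because these properties pass to a.e.\ limits along subsequences. The other follows by mollification and truncation: convolving with an even mollifier preserves evenness and unimodality, and multiplying by a cutoff that is even and nonincreasing in $|x|$ also preserves them. With this characterization, steps 1--3 reduce to pointwise monotonicity arguments, with no smoothness needed.
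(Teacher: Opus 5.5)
Your argument follows the paper's route: nonnegativity, evenness and unimodality pass through $A_m$, through $\Psi_m'$ (nondecreasing with $\Psi_m'(0)=0$), through the second $A_m$ and through the nonnegative series, and the factor $\mu(W)$ gives $\|\Tcal(W)\|_2=\|W\|_2=\sqrt{2K}$. You also supply several points the paper leaves implicit: $\partial\Pcal(W)\neq 0$ via the strict convexity of $\Phi_1$, the $\mathsf{L}^2$-convergence of the series, and the identification of $\Ccal$ with the $\mathsf{L}^2$ functions that are a.e.\ even, nonnegative and nonincreasing in $|x|$.

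There is one slip, in that identification. A merely even mollifier can destroy unimodality. For example, take $W=\chi_{[-\eta,\eta]}$ with $\eta$ small and an even mollifier that dips at $0$; then $\rho\ast W\approx 2\eta\rho$ is bimodal. You should use a mollifier that is itself nonincreasing in $|x|$, such as the standard one. For it,
\[
(\rho\ast W)'(x)=\int_0^\infty\rho'(t)\,\big(W(x-t)-W(x+t)\big)\,{\rm d}t\le 0 \quad\text{for } x\ge 0,
\]
by the same comparison $|x-t|\le x+t$ that you use in step 1.
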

\begin{proof} Let $W \in \CK$. Using $\mu(W) \geq 0$ and $\Psi_m'(r) \geq 0$ for all $r \geq 0$ we get the nonnegativity of $\Tcal(W)$ via
\begin{align}
W \geq 0 &\Rightarrow A_m W \geq 0 \Rightarrow \Psi_m'(A_m W) \geq 0 \Rightarrow A_m \Psi_m'(A_m W) \geq 0 \nonumber \\
&\Rightarrow \suminf A_m \Psi_m'(A_m W) \geq 0 \Rightarrow \Tcal(W) \geq 0.
\end{align}
Furthermore, if $W$ is unimodal, then $A_m W$ and $\Psi_m'(A_m W)$ are also since $\Psi_m'$ is monotonically increasing. Hence, $A_m \Psi_m'(A_m W)$ is also unimodal and it follows that $\Tcal(W)$ is unimodal. That  $ \Tcal(W)$ is even if $W$ is even follows similarly. \\
We conclude the proof with the observation $\| \Tcal(W) \|_2 = \| W \|_2 = 2K$.
\end{proof}
The operator $\Tcal$ strictly increases the potential energy unless we have already a traveling wave solution.

\begin{lemma}
\label{le:T}
Let  $W \in \CK$, then $\Pcal(\Tcal(W)) \geq \Pcal(W)$. Moreover, equality holds if and only if $W = \Tcal(W)$, i.e. $W$ satisfies \eqref{eq:wave_eq} with $c = \mu(W)^{-\frac12}$.
\end{lemma}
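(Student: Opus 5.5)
The plan is to prove $\Pcal(\Tcal(W)) \geq \Pcal(W)$ from the convexity of each $\Psi_m$ (Lemma \ref{le:psi_aux}), followed by the Cauchy--Schwarz inequality, and then to read off the equality case. Write $V := \Tcal(W)$ and $G := \partial\Pcal(W) = \suminf A_m \Psi_m'(A_m W)$. Since $\Psi_m'' \geq 0$ on $[0,\nu m)$, and $A_mV$, $A_mW$ take values in $[0,\sqrt{2Km}] \subset [0,\nu m)$ by Lemma \ref{le:AmW} (note that $V\in\CK$ by Lemma \ref{le:T_aux}), the tangent-line inequality holds pointwise:
\begin{align*}
\Psi_m(A_m V) \geq \Psi_m(A_m W) + \Psi_m'(A_m W)\,(A_m V - A_m W).
\end{align*}
We integrate over $\Rset$ and sum over $m$. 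Interchanging sum and integral is justified because every term is absolutely integrable: $\Pcal$ is finite on $\CK$ by Lemma \ref{le: P_well_def}, and the gradient series converges in $\mathsf{L}^2$ by the previous lemma. Using that $A_m$ is self-adjoint, since $\chi_{[-m/2,m/2]}$ is even, we obtain
\begin{align*}
\Pcal(V) - \Pcal(W) \geq \langle G, V - W\rangle = \mu(W)\|G\|_2^2 - \langle G, W\rangle = \|W\|_2\|G\|_2 - \langle G,W\rangle \geq 0,
\end{align*}
where the last step is Cauchy--Schwarz.

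For the equality case, assume $\Pcal(V) = \Pcal(W)$. Then both inequalities above must be equalities. The Cauchy--Schwarz equality forces $G = \lambda W$ for some $\lambda \geq 0$. Here $\lambda > 0$, because $G \neq 0$: $\Psi_1'$ is strictly positive on $(0,\nu)$ by the strict assumptions for $m=1$, and $A_1 W \not\equiv 0$ for $W \neq 0$. Since $\|V\|_2 = \|W\|_2$, it follows that $V = \mu(W) G = W$.

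Conversely, if $V = W$, equality is trivial. In that case $W = \mu(W)\suminf A_m\Psi_m'(A_mW)$, and inserting $\Psi_m'(r) = -\Phi_m'(\nu m - r) + \Phi_m'(\nu m)$ gives
\begin{align*}
\mu(W)^{-1} W = \suminf A_m\bigl(-\Phi_m'(\nu m - A_m W)\bigr) + \suminf m\,\Phi_m'(\nu m),
\end{align*}
using $A_m 1 = m$. The constant term converges by \eqref{Phi'm}, so this is \eqref{eq:wave_eq} with $\eta = \suminf m\Phi_m'(\nu m)$ and $c^2 = \mu(W)^{-1}$. The identification $c = \mu(W)^{-1/2}$ stated in the lemma follows.

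The step I expect to need the most care is the rigorous justification of the termwise manipulations. Specifically, the convexity inequality has to be summed over infinitely many $m$, with the linear term $\sum_m \langle \Psi_m'(A_mW), A_m(V-W)\rangle$ absolutely convergent so that it can be identified with $\langle G, V-W\rangle$. The bounds $\|\Psi_m'(A_mW)\|_2 \leq \Phi_m''(\nu m - \sqrt{2Km})\sqrt{2K}\,m$ and $\|A_m(V-W)\|_2 \leq 2m\sqrt{2K}$ from the previous lemma, together with \eqref{eq:Phi_convergence2}, supply the required summability, so I expect this to go through.
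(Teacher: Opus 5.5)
Your proof is correct and follows essentially the paper's route. Both arguments start from the tangent-line inequality given by the convexity of the $\Psi_m$. The paper then uses $\|\Tcal(W)\|_2=\|W\|_2$ to rewrite $\langle \partial\Pcal(W),\Tcal(W)-W\rangle$ as $\|\Tcal(W)-W\|_2^2/(2\mu(W))$, while you use Cauchy--Schwarz and its equality case; these agree, since $\|W\|_2\|\partial\Pcal(W)\|_2-\langle \partial\Pcal(W),W\rangle=\|\Tcal(W)-W\|_2^2/(2\mu(W))$. Your checks that $\partial\Pcal(W)\neq 0$ and that the sum over $m$ may be taken termwise fill in details the paper leaves implicit.
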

\begin{proof} $\Psi_m''(r) \geq 0$ implies that $\Pcal$ is convex and hence,
\begin{align}\label{eqn:Pconvex}
\Pcal(V) - \Pcal(W) \geq \langle \partial \Pcal(W), V-W \rangle
\end{align}
holds for all $V,W \in \mathsf{L}^2(\Rset)$. This result and Lemma \ref{le:T_aux} yield
\begin{align}
\Pcal(\Tcal(W)) - \Pcal(W) &\geq \langle \partial \Pcal(W), \Tcal(W)-W \rangle 
= \frac{\langle \Tcal(W), \Tcal(W)-W \rangle }{\mu(W)}
\nonumber \\
&= \frac{\|\Tcal(W)\|_2^2 - 2 \langle\Tcal(W),W\rangle + \|W\|_2^2}{2\mu(W)} 
= \frac{\|T(W)-W\|_2^2}{2\mu(W)}.
\end{align}
It follows that $\Pcal(\Tcal(W)) \geq \Pcal(W)$ and equality holds if and only if $W = \Tcal(W)$. Finally, if $W = \Tcal(W)$, we note
\begin{align}
W = \Tcal(W) &= \mu(W)\partial \Pcal(W) 
= \mu(W) \suminf A_m\Psi_m'(A_m W)  \nonumber \\
&= \mu(W) \suminf A_m \big( -\Phi_m'(\nu m - A_m W) + \Phi_m'(\nu m) \big) \nonumber \\
&= \mu(W) \suminf A_m \big( -\Phi_m'(\nu m - A_m W) \big) + \mu(W) \suminf A_m \big( \Phi_m'(\nu m) \big),
\end{align}
providing a solution to \eqref{eq:wave_eq} with
\begin{align}
c = \mu(W)^{-\frac12}
\end{align}
and
\begin{align}
\label{eq:eta}
\eta = \suminf A_m \Phi_m'(\nu m)= \suminf m \Phi_m'(\nu m).
\end{align}
Note that the  series in \eqref{eq:eta} converges by \eqref{Phi'm}.
\end{proof}

\subsection{Existence of Maximizers} \label{sub:2.2}

It is our goal to show that every maximizing sequence for $\Pcal$ in $\CK$ admits a strongly convergent subsequence. This will complete  the existence proof of a maximizer $W \in \CK$, which is then a solution  of \eqref{eq:wave_eq}. 
For this purpose, we define -- based on the quantities in \cite{Herrmann2020} -- the modified quantities
\begin{align}
P(K) := \sup\limits_{W \in \CK} \Pcal(W), \qquad Q(K) := \sup\limits_{W \in \CK} \Qcal(W)
\end{align}
with
\begin{align}
\Qcal(W) := \frac{1}{2} \intR \suminf \Phi_m''(\nu m) ( A_m W(s) )^2 \,{\rm d}s.
\end{align}
Note that $\Qcal$ is the quadratic term of $\Pcal$ when Taylor expanding $\Psi_m$ around $0$ in the definition of $\Pcal$ and recalling $\Psi_m''(0) = \Phi_m''(\nu m)$. The subsequent Lemma quantifies $Q(K)$ before we prove in Lemma \ref{le:P(K)-Q(K)} that $P(K)$ is strictly greater than $Q(K)$, implying that the super-quadratic terms in $\Pcal(W)$ play a significant role.

\begin{lemma}
\label{le:Q(K)}
For all $K>0$,  $Q(K) = \suminf \Phi_m''(\nu m) K m^2$ holds.
\end{lemma}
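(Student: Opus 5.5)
We establish the two inequalities $Q(K) \le \suminf \Phi_m''(\nu m) K m^2$ and $Q(K) \ge \suminf \Phi_m''(\nu m) K m^2$ separately. The first comes from the same Hölder/Fubini computation as in \eqref{eq:int Psi_m}. The second comes from a sequence of increasingly wide, flat elements of $\CK$ for which each term $\frac12\|A_m W\|_2^2$ approaches $m^2 K$.

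\emph{Upper bound.} Let $W \in \CK$ and fix $m$. Applying Cauchy--Schwarz to the inner integral, exactly as in \eqref{eq:int Psi_m}, gives
\begin{align*}
\intR (A_m W(s))^2\,{\rm d}s \le m \int_{-\frac m2}^{\frac m2}\intR W^2(s+\tau)\,{\rm d}s\,{\rm d}\tau = 2K m^2 .
\end{align*}
Equivalently, Young's inequality yields $\|\chi_{[-m/2,m/2]}\ast W\|_2\le \|\chi\|_1\|W\|_2 = m\sqrt{2K}$. Since $\Phi_m''(\nu m)\ge 0$, Tonelli lets us exchange sum and integral in $\Qcal$. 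Hence $\Qcal(W)\le \suminf \Phi_m''(\nu m)Km^2$, and this series is finite by \eqref{eq:Phi_convergence3}, because $m^2\le m^\gamma$. Taking the supremum over $\CK$ gives the upper bound.

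\emph{Lower bound.} For $L>0$ define the indicator functions
\begin{align*}
W_L := \sqrt{K/L}\,\chi_{[-L,L]} .
\end{align*}
These are even, nonnegative and unimodal, and satisfy $\|W_L\|_2^2 = 2K$. To see that they lie in $\Ccal$, mollify them and take the $\mathsf L^2$ closure; mollifying even unimodal step functions with an even nonnegative mollifier preserves the required symmetry and monotonicity. For $m<2L$ the function $A_mW_L$ is a trapezoid: it equals $m\sqrt{K/L}$ on $[-L+m/2,\,L-m/2]$. Therefore
\begin{align*}
\intR (A_m W_L)^2\,{\rm d}s \ge m^2\frac{K}{L}\,(2L-m) = 2Km^2\Big(1-\frac{m}{2L}\Big).
\end{align*}
Hence $\Qcal(W_L)\ge \sum_{m<2L}\Phi_m''(\nu m)Km^2\big(1-\frac{m}{2L}\big)$. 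As $L\to\infty$, each summand increases to $\Phi_m''(\nu m)Km^2$, so monotone convergence (or a finite truncation at $M$ followed by $M\to\infty$) gives $\liminf_{L\to\infty}\Qcal(W_L)\ge \suminf\Phi_m''(\nu m)Km^2$. This proves the lower bound.

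The only step needing care is confirming that the indicator functions belong to $\Ccal$ and that the tail of the series causes no trouble. Both are mild. Membership in $\Ccal$ follows from an explicit smooth approximation of the kind described above. The tail is controlled because the series of nonnegative terms converges by \eqref{eq:Phi_convergence3}, which justifies the interchange of limits.
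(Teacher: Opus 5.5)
Your proof is correct and follows the paper's argument: the same Cauchy--Schwarz/Fubini bound $\int (A_mW)^2\,{\rm d}s\le 2Km^2$ for the upper estimate, and the same flat test functions $W_L=\sqrt{K/L}\,\chi_{[-L,L]}$ for the lower one. The differences are minor. You pass to the limit $L\to\infty$ by monotone convergence, so your lower bound needs no summability hypothesis, whereas the paper truncates at $m\le\sqrt{L}$ and uses \eqref{eq:Phi_convergence3} to control the factor $1-\frac{1}{2\sqrt{L}}$. You also justify $W_L\in\Ccal$ by mollification, which the paper takes for granted.
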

\begin{proof}
Using H\"older's inequality and Fubini's theorem in the same way as in the proof of Lemma \ref{le: P_well_def}, we obtain an upper bound
\begin{align}
Q(K) \leq \suminf \Phi_m''(\nu m) K m^2.\label{eqn:Qkup}
\end{align}
Now, we consider a family of test functions to get matching lower bounds.  For a parameter $L \in \Nset$ such that $\sqrt{L} \in \Nset$, we define  $W_L \in \CK$  by
\begin{align}
\label{eq:W_L}
W_L(x) := \sqrt{\frac{K}{L}}\, \chi_{[-L,L]}(x)
\end{align}
and note that for all $m \leq 2L$ (and in particular $m \leq \sqrt{L}$) we have
\begin{align}
\label{eq:AmWL}
A_m W_L(\xi) = \begin{cases}
\sqrt{\frac{K}{L}}\,m \, &\text{if}\,\, 0 \leq |\xi| \leq L - \frac{m}{2}, \smallskip \\
\sqrt{\frac{K}{L}}\, (L - (\xi-\frac{m}{2})) \, &\text{if}\,\, L - \frac{m}{2} < |\xi| \leq L + \frac{m}{2}, \smallskip \\
0 \, &\text{if}\,\, |\xi| > L+\frac{m}{2}.
\end{cases}
\end{align}
With \eqref{eq:AmWL} it follows
\begin{align}
\label{eq:Q(W_L)}
\Qcal(W_L) &= \frac12 \suminf \Phi_m''(\nu m) \intR (A_m W_L(s) )^2 \,{\rm d}s \nonumber \geq \frac12 \sum_{m=1}^{\sqrt{L}}  \Phi_m''(\nu m) \int\limits_{-(L-\frac{m}{2})}^{L-\frac{m}{2}} (A_m W_L(s) )^2 \,{\rm d}s \nonumber \\
&= \frac12 \sum_{m=1}^{\sqrt{L}}  \Phi_m''(\nu m) \, \frac{K}{L} m^2 (2L-m) \nonumber = \sum_{m=1}^{\sqrt{L}} \Phi_m''(\nu m)  K  m^2 \Big( 1 - \frac{m}{2L} \Big) \nonumber \\
&\geq \sum_{m=1}^{\sqrt{L}} \Phi_m''(\nu m)  K  m^2 \Big( 1 - \frac{1}{2\sqrt{L}} \Big).
\end{align}
Note that the  integral occurring in the third term is positive since all $m \in \{1,...,\sqrt{L}\}$ satisfy $L-\frac{m}{2}  > 0$.
We obtain that
\begin{align}
\label{eq:sum0}
0\leq \lim\limits_{L \to \infty} \sum\limits_{m=1}^{\sqrt{L}} \Phi_m''(\nu m) K m^2 \,  \frac{1}{2\sqrt{L}} \leq \lim\limits_{L \to \infty}\frac{K}{2\sqrt{L}}   \sum\limits_{m=1}^{\infty} \Phi_m''(\nu m)  m^\gamma  = 0
\end{align}
holds by  \eqref{eq:Phi_convergence3} with $\gamma>5/2$. Finally, using \eqref{eq:Q(W_L)} and \eqref{eq:sum0}, we derive the corresponding lower bounds to \eqref{eqn:Qkup}
\begin{align}
Q(K) \geq \lim\limits_{L \to \infty} \Qcal(W_L) \geq \suminf  \Phi_m''(\nu m) K m^2,
\end{align}
which completes the proof.
\end{proof}

\begin{lemma}
\label{le:P(K)-Q(K)}
For all $K$ such that $0 < K< \frac{\nu^2}{2}$,  $P(K) > Q(K)$ holds.
\end{lemma}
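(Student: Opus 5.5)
The plan is to compare $\Pcal(W_L)$ with $Q(K)$ for the test functions $W_L$ from \eqref{eq:W_L}, with $L$ large. The argument rests on one quantitative point. The cubic part of $\Pcal$ gains an amount of order $L^{-1/2}$ over $\Qcal(W_L)$, while the quadratic deficit $Q(K)-\Qcal(W_L)$ is of order $L^{2-\gamma}$. Since $\gamma>5/2$, the deficit is $o(L^{-1/2})$, and so $\Pcal(W_L)>Q(K)$ for large $L$. This is where the assumption $\gamma\in(5/2,3)$ in \eqref{eq:Phi_convergence3} is used.

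\emph{Step 1 (pointwise lower bound for $\Psi_m$).} Since $\Phi_m\in C^4$, Taylor expansion of \eqref{def:psi} around $r=0$ to fourth order gives
\begin{align*}
\Psi_m(r)=\tfrac12\Phi_m''(\nu m)r^2-\tfrac16\Phi_m'''(\nu m)r^3+\tfrac1{24}\Phi_m^{(4)}(\nu m-\theta r)r^4 .
\end{align*}
This holds for $r\in[0,\nu m)$, which covers $r=A_mW(s)$ by Lemma \ref{le:AmW} and \eqref{eq:Assumption_K}. By \eqref{eqn:assPhi}, $\Phi_m'''\le 0$ and $\Phi_m^{(4)}\geq0$, so $\Psi_m(r)\ge \tfrac12\Phi_m''(\nu m)r^2$ for every $m$. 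For $m=1$ we have the stronger bound $\Psi_1(r)\ge\tfrac12\Phi_1''(\nu)r^2+\kappa r^3$ with $\kappa:=-\Phi_1'''(\nu)/6>0$, using the strictness assumption for $m=1$. Summing and integrating gives
\begin{align*}
\Pcal(W)\ \ge\ \Qcal(W)+\kappa\intR (A_1W)^3\,{\rm d}s \qquad\text{for all } W\in\CK .
\end{align*}

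\emph{Step 2 (gain).} By \eqref{eq:AmWL}, $A_1W_L=\sqrt{K/L}$ on an interval of length $2L-1$. Hence
\begin{align*}
\kappa\intR(A_1W_L)^3\,{\rm d}s\ \ge\ \kappa(2L-1)(K/L)^{3/2}\ \ge\ c_0L^{-1/2}
\end{align*}
for some $c_0>0$ and all large $L$.

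\emph{Step 3 (deficit, the main obstacle).} The crude lower bound \eqref{eq:Q(W_L)} is not sharp enough, because the truncation at $m\le\sqrt L$ only gives a tail of order $L^{(2-\gamma)/2}$. That is not $o(L^{-1/2})$ for $\gamma<3$. So I would compute $\intR(A_mW_L)^2$ exactly for all $m\le 2L$, including the two linear ramps in \eqref{eq:AmWL}. One gets $\frac KL\big(2Lm^2-\frac{m^3}{3}\big)=Km^2-\frac{Km^3}{3L}$. Therefore
\begin{align*}
Q(K)-\Qcal(W_L)\ \le\ \sum_{m\le 2L}\Phi_m''(\nu m)\frac{Km^3}{6L}+\sum_{m>2L}\Phi_m''(\nu m)Km^2 .
\end{align*}
Here Lemma \ref{le:Q(K)} is used for $Q(K)$, and the terms with $m>2L$ are simply dropped from $\Qcal(W_L)$ since they are nonnegative. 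In the first sum, write $m^3=m^\gamma m^{3-\gamma}\le m^\gamma(2L)^{3-\gamma}$. In the second, write $m^2=m^\gamma m^{2-\gamma}\le m^\gamma (2L)^{2-\gamma}$. By \eqref{eq:Phi_convergence3}, both sums are then bounded by $C\,L^{2-\gamma}$.

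\emph{Conclusion.} Combining Steps 1--3 gives
\begin{align*}
P(K)\ \ge\ \Pcal(W_L)\ \ge\ Q(K)+c_0L^{-1/2}-CL^{2-\gamma}.
\end{align*}
Since $2-\gamma<-1/2$, the right-hand side exceeds $Q(K)$ for $L$ large enough, which proves $P(K)>Q(K)$. The paper already uses that $W_L\in\CK$, as an $\mathsf L^2$-limit of smooth even unimodal functions. The one point needing care is that the exact ramp computation in Step 3 is correct for all $m\le 2L$, not only for $m\le\sqrt L$.
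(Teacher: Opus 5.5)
Your proposal is correct and follows essentially the paper's proof: the same test functions $W_L$, the same cubic gain $K^{3/2}\kappa(2L^{-1/2}-L^{-3/2})$ from the $m=1$ term, and a quadratic deficit $Q(K)-\Qcal(W_L)=O(L^{2-\gamma})$. The paper simply drops the ramps and truncates at $m\le L$ instead of computing the ramps exactly for $m\le 2L$, which works equally well. There is one harmless arithmetic slip: $\frac KL\big(2Lm^2-\frac{m^3}{3}\big)=2Km^2-\frac{Km^3}{3L}$, not $Km^2-\frac{Km^3}{3L}$, but after the factor $\frac12$ in $\Qcal$ your deficit term $\frac{Km^3}{6L}$ is right.
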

\begin{proof}
As in the proof of Lemma \ref{le:Q(K)}, we consider the family of test functions $W_L$ defined in \eqref{eq:W_L} with parameter $L \in \Nset$.

\textit{Estimate for $\Pcal(W_L) - \Qcal(W_L)$:} First we observe that the Taylor expansion
\begin{align}
\Phi_m(\nu m - r) = \Phi_m(\nu m) - \Phi_m'( \nu m)r + \Phi_m''(\nu m)\frac{r^2}{2} - \Phi_m'''(\nu m - \theta r)\frac{r^3}{3!}
\end{align}
with some $\theta \in [0,1]$ implies
\begin{align}
\label{eq:estimate_cubic_Phi1}
\Psi_m(A_mW(s)) - \Phi_m''(\nu m) \frac{(A_mW(s))^2}{2} = -\Phi_m'''(\nu m - \theta A_m W(s) ) \frac{(A_m W(s))^3}{3!}.
\end{align}
Using Lemma \ref{le:AmW} and the fact that $\Phi_m'''$ is nonpositive and monotonically increasing, we obtain
\begin{align}
\label{eq:estimate_cubic_Phi2}
-\Phi_m'''(\nu m - \theta A_m W(s) ) \frac{(A_m W(s))^3}{3!} \geq C_m (A_m W(s))^3
\end{align}
with the constant $C_m := -\Phi_m'''(\nu m)/3! \geq 0$.

Combining \eqref{eq:AmWL}, \eqref{eq:estimate_cubic_Phi1} and \eqref{eq:estimate_cubic_Phi2} we then obtain
\begin{align}
\label{eq:P(WL)-Q(WL)aux}
\Pcal(W_L) - \Qcal(W_L) &= \intR \suminf \Psi_m(A_m W_L(s)) - \Phi_m''(\nu m) (A_m W_L(s) )^2 \, {\rm d}s  \nonumber\\
&\geq \intR \suminf C_m (A_m W_L(s))^3 
\geq C_1 \int\limits_{-(L-\frac12)}^{L-\frac12} (A_1 W_L(s))^3 \, {\rm d}s  \nonumber \\
&= \frac{C_1 K^{\frac32}}{L^\frac32} (2L-1 ) 
= c_1 (2L^{-\frac12} - L^{-\frac32})
\end{align}
for the constant $c_1 := C_1K^{\frac32} > 0$. Here, we use the strict negativity of $\Phi'''_1$ in \eqref{eqn:assPhi}.

\textit{Estimate for $Q(K) - \Qcal(W_L)$:}
Analogously to the computations in \eqref{eq:Q(W_L)} we estimate
\begin{align}
\label{eq:Qcal}
    \Qcal(W_L) \geq \sum_{m=1}^{L} \Phi_m''(\nu m)  K  m^2 \Big( 1 - \frac{m}{2L} \Big).
\end{align}
With Lemma \ref{le:Q(K)} and \eqref{eq:Qcal} we observe
\begin{align}
\label{eq:Q(K)-Q(W_L)}
Q(K) - \Qcal(W_L) &\leq \suminf \Phi_m''(\nu m) K m^2 - \sum\limits_{m=1}^{L} \Phi_m''(\nu m) K m^2 \Big( 1- \frac{m}{2L} \Big)  \nonumber\\
&= \sum\limits_{m=L+1}^{\infty} \Phi_m''(\nu m) K m^2 + \sum\limits_{m=1}^{L} \Phi_m''(\nu m) K \frac{m^3}{2L}.
\end{align}
Using the global bound in \eqref{eq:Phi_convergence3}, we get for the first sum of \eqref{eq:Q(K)-Q(W_L)} that
\begin{align}
    &\sum\limits_{m=L+1}^{\infty} \Phi_m''(\nu m) K m^2 = K \sum\limits_{m=L+1}^{\infty} \Phi_m''(\nu m) m^\gamma m^{2-\gamma  } \nonumber \\
    &\leq K \sum\limits_{m = L + 1}^\infty \Phi_m''(\nu m) m^\gamma  \big(L+1\big)^{\gamma -2} \nonumber
    = \big(L+1\big)^{-(\gamma -2)} K \sum\limits_{m = L + 1}^\infty \Phi_m''(\nu m) m^\gamma \nonumber \\
    &\leq c_2 L^{-(\gamma -2)}
\end{align}
holds with $c_2 \geq 0$. \\
Analogously, the second sum in \eqref{eq:Q(K)-Q(W_L)} can be estimated by
\begin{align}
    \sum\limits_{m=1}^{L} \Phi_m''(\nu m) K \frac{m^3}{2L} \leq c_3 \frac{L^{3-\gamma}}{L}  = c_3 L^{-(\gamma -2)}, \quad c_3 \geq 0.
\end{align}
\textit{Combining all results:} We now have
\begin{align}
P(K) \geq \Pcal(W_L) &= \Qcal(W_L) + \Pcal(W_L) - \Qcal(W_L) \nonumber  \\
&\geq Q(K) - c_2 L^{{-(\gamma -2)}} - c_3 L^{{-(\gamma -2)}} + c_1(2L^{-\frac12} - L^{-\frac32} ) .
\end{align}
As $-(\gamma -2)< -\frac12$ by assumption, we obtain the inequality  $P(K) > Q(K)$ by choosing $L$ finite but sufficiently large.
\end{proof}

We are now in the position to prove the main technical step.

\begin{proposition}
\label{le:strong convergence}
Any sequence $(W_n)_{n\in\Nset} \subseteq \CK$ that satisfies $\lim\limits_{n\to\infty} \Pcal(W_n) = P(K)$ admits a strongly convergent subsequence in $\mathsf{L}^2(\Rset)$.
\end{proposition}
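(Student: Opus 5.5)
Since $\|W_n\|_2^2=2K$, we may pass to a subsequence with $W_n\rightharpoonup W_\infty$ weakly in $\mathsf{L}^2(\Rset)$, and $W_\infty\in\Ccal$ because $\Ccal$ is weakly closed. Strong convergence is then equivalent to $\|W_\infty\|_2^2=2K$. The proof therefore has two parts: first show that $\Pcal$ is weakly sequentially continuous along sequences in $\CK$, and then use the gap $P(K)>Q(K)$ from Lemma \ref{le:P(K)-Q(K)} to rule out loss of mass, which here can only mean spreading, since unimodality forbids escape by translation.

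\emph{Step 1 (compactness of the $A_m$ on the cone).} For even, unimodal, nonnegative $W$ with $\|W\|_2^2=2K$ we have the pointwise bound $W(x)^2\le K/|x|$, because $2|x|W(x)^2\le\int_{-|x|}^{|x|}W^2$. Each $A_mW_n$ is therefore uniformly bounded and Lipschitz, with $|(A_mW_n)'|\le 2W_n(\cdot\pm m/2)$, which is locally bounded away from $0$. By Helly's theorem, or simply from weak convergence, $A_mW_n(s)=\langle W_n,\chi_{[s-m/2,s+m/2]}\rangle\to A_mW_\infty(s)$ pointwise for every $s$. Together with the bound $0\le A_mW_n\le\sqrt{2Km}$ of Lemma \ref{le:AmW}, this gives convergence of $\int_{-R}^R\Psi_m(A_mW_n)$ for every fixed $R$ and $m$, by dominated convergence.

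\emph{Step 2 (control of tails and large $m$).} Following the proof of Lemma \ref{le: P_well_def}, the contribution of $m>M$ is bounded by $\sum_{m>M}Km^2\Phi_m''(\nu m-\sqrt{2Km})$, uniformly in $n$, and this tends to $0$ by \eqref{eq:Phi_convergence2}. On the region $|s|>R$ with $m\le M$ we split $\Psi_m(r)=\tfrac12\Phi_m''(\nu m)r^2+(\text{cubic remainder})$. The remainder is bounded by $|\Phi_m'''(\nu m-\sqrt{2Km})|\,r^3/6$, and on $|s|>R$ we have $A_mW_n\le m\sqrt{K/(R-m/2)}$, which is small. Hence the cubic part is at most $o_R(1)\cdot\int (A_mW_n)^2\le o_R(1)\,2Km^2$, and \eqref{eq:Phi_convergence4} controls the sum over $m$. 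Combining this with Step 1 gives
\begin{align*}
\Pcal(W_n)=\Pcal(W_\infty)+\Qcal(W_n)-\Qcal(W_\infty)+o(1),
\end{align*}
more precisely: $\Pcal(W_n)-\Qcal(W_n)\to\Pcal(W_\infty)-\Qcal(W_\infty)$. The identity needs some care, because the quadratic part on $|s|>R$ is not small and is exactly where the lost mass sits.

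\emph{Step 3 (excluding spreading).} Let $2K_\infty=\|W_\infty\|_2^2\le 2K$. Then
\begin{align*}
P(K)=\lim\Pcal(W_n)\le \Pcal(W_\infty)-\Qcal(W_\infty)+\limsup\Qcal(W_n).
\end{align*}
By Lemma \ref{le:Q(K)}, $\Qcal$ is a quadratic form with $\Qcal(W)\le \sigma\|W\|_2^2/2$, where $\sigma=\sum_m\Phi_m''(\nu m)m^2$. Write $W_n=W_\infty+V_n$ with $V_n\rightharpoonup0$. The cross term in $\Qcal$ vanishes in the limit, because it equals $\sum_m\Phi_m''(\nu m)\langle A_m^2W_\infty,V_n\rangle\to0$. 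Hence $\limsup\Qcal(W_n)\le\Qcal(W_\infty)+\sigma(K-K_\infty)$, which yields
\begin{align*}
P(K)\le\Pcal(W_\infty)+Q(K-K_\infty).
\end{align*}
If $K_\infty=0$, this gives $P(K)\le Q(K)$, contradicting Lemma \ref{le:P(K)-Q(K)}. If $0<K_\infty<K$, we use superquadratic scaling. Since $\Psi_m''$ is nondecreasing (because $\Phi_m'''\le0$), the function $r\mapsto\Psi_m(r)/r^2$ is nondecreasing, so $\Pcal(\lambda W)\ge\lambda^2\Pcal(W)$ for $\lambda\ge1$ as long as $\lambda W\in\mathcal C_K$. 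Applying this with $\lambda^2=K/K_\infty$ gives $\Pcal(W_\infty)\le (K_\infty/K)P(K)$. Since $Q$ is linear in $K$, we get $Q(K-K_\infty)=(1-K_\infty/K)Q(K)$. Therefore
\begin{align*}
P(K)\le\frac{K_\infty}{K}P(K)+\Big(1-\frac{K_\infty}{K}\Big)Q(K)<P(K),
\end{align*}
which is a contradiction. Hence $K_\infty=K$, and the convergence is strong.

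The main obstacle is Step 2: the tail region carries quadratic but not cubic energy, and this has to be made uniform over the infinite sum in $m$ using \eqref{eq:Phi_convergence2} and \eqref{eq:Phi_convergence4}.
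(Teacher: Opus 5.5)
Your proof is correct, but it takes a different route from the paper's.

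\textbf{The paper's route.} It decomposes in space, $W_n=\widetilde W_n+\overline W_n$ with a cut-off at $|s|=L$, and truncates the sum at $m\le M$. It then proves $\Pcal(W_n)\approx\widetilde{\Pcal}(\widetilde W_n)+\widetilde{\Qcal}(\overline W_n)$ up to $3\epsilon$. The interface between the two pieces is controlled by a logarithmic term, and the outer piece by the cubic Taylor remainder. The inner piece passes to the limit through strong convergence of $A_m\widetilde W_n$ on compact support. Finally, $\|\widetilde W_\infty\|_2^2+\|\overline W_n\|_2^2\lesssim 2K$ is played against $P(K)>Q(K)$, which gives $\|\overline W_n\|_2^2\le C\epsilon$ and then $\|\widetilde W_\infty\|_2^2\ge 2K$.

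\textbf{Your route.} You split along the weak limit, $W_n=W_\infty+V_n$. You show that the super-quadratic remainder $\Pcal-\Qcal$ is weakly sequentially continuous on the cone. The same ingredients appear here as in the paper: the decay $W(x)^2\le K/|x|$ from unimodality, the uniform tail in $m$ from \eqref{eq:Phi_convergence2}, and the cubic bound. The quadratic form $\Qcal$ splits asymptotically, because the cross term is $\langle G,V_n\rangle$ with a fixed $G\in\mathsf{L}^2(\Rset)$. This produces the inequality $P(K)\le\Pcal(W_\infty)+Q(K-K_\infty)$, and superquadratic scaling closes the argument in one line.

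\textbf{Comparison.} Your version avoids the interface estimate and the $\epsilon$/$\tilde\epsilon$ bookkeeping. It also makes explicit a point the paper passes over. The outer piece $\overline W_n$, like your $V_n$, does not lie in $\Ccal$. So the bound by $Q(K)$ really rests on $\Qcal(V)\le\frac{\sigma}{2}\|V\|_2^2$ for every $V\in\mathsf{L}^2(\Rset)$. This is the upper-bound half of Lemma \ref{le:Q(K)}, whose proof never uses the cone, and it is exactly what you invoke. You should say so, rather than citing the lemma as if it applied to $V_n$ directly. The paper's localized argument, for its part, stays closer to the concentration-compactness template of the finite-range setting.

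\textbf{Two minor inaccuracies.} Neither affects the argument.
\begin{itemize}
\item $A_mW_n$ need not be uniformly Lipschitz. $W_n$ may be unbounded at $0$, so $(A_mW_n)'=W_n(\cdot+m/2)-W_n(\cdot-m/2)$ can blow up near $s=\pm m/2$. You only need the pointwise limit $A_mW_n(s)=\langle W_n,\chi_{[s-m/2,s+m/2]}\rangle\to A_mW_\infty(s)$, which weak convergence gives directly.
\item Once you truncate at $m\le M$, assumption \eqref{eq:Phi_convergence4} plays no role in Step 2.
\end{itemize}
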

\begin{proof}
\textit{Preliminaries:} Since $(W_n)_n$ is a bounded sequence in $\mathsf{L}^2$, there exists a (not relabeled) subsequence with
\begin{align}
\label{eq:weak}
W_n \rightharpoonup  W_{\infty} \quad  \text{ for } n \to \infty \text{ weakly in} \, \mathsf{L}^2(\Rset).
\end{align}
Note that $\Ccal$ is convex and closed and thus, $W_{\infty} \in \Ccal$. It is our goal to show
\begin{align}
\|W_{\infty}\|_2^2 \geq 2K,
\end{align}
since this implies $\|W_{\infty}\|_2^2 = 2K$ due to
\begin{align}
\|W_{\infty}\|_2^2 \leq \liminf\limits_{n\to\infty} \|W_n\|_2^2 = 2K.
\end{align}
The strong convergence then follows directly from the weak convergence.
\\
\textit{Defining new quantities:} For given cut-off parameters $L,M \in \Nset$ we define the functions
\begin{align}
\widetilde{W}_n(s) := W_n(s) \chi_{[-L,L]}(s) \qquad \text{and} \qquad \overline{W}_n(s) := W_n(s) - \widetilde{W}_n(s)
\end{align}
and the quantities
\begin{align}
\widetilde{\Pcal}(W) := \sum\limits_{m=1}^{M} \intR \Psi_m(A_m W(s)) \,{\rm d}s \qquad \text{and} \qquad \widetilde{\Qcal}(W) := \sum\limits_{m=1}^{M} \intR \frac12 \Phi_m''(\nu m) (A_m W(s))^2 \,{\rm d}s.
\end{align}
We note that the identity
\begin{align}
\label{eq:Wt + Wo}
    \|\Wt_n\|_2^2 + \|\Wo_n\|_2^2 = \|W_n\|_2^2 = 2K
\end{align}
holds for all $n \in \Nset$. \\
\textit{Approximations:} Let $ \epsilon >0$ be given. Our first observation is that by choosing $M$ large enough we can guarantee
\begin{align}
0 \leq | \Pcal(W_n) - \widetilde{\Pcal}(W_n)| \leq \sum_{m=M+1}^\infty K m^2 \Phi_m''(\nu m - \sqrt{2Km}) \leq \epsilon
\end{align}
uniformly in $\Ccal_K$ due to the estimates in the proof of Lemma \ref{le: P_well_def}.
Secondly, we note that $A_m \widetilde{W}_n$ is supported in $[-L-\frac{m}{2}, L+\frac{m}{2}]$, while $A_m \overline{W}_n$ is supported in $\Rset \setminus [-L+\frac{m}{2}, L-\frac{m}{2}]$. With that we derive
\begin{align}
\label{eq:estimate P_tilde}
&|\Pt(W_n) - \Pt(\Wt_n)-\Pt(\Wo_n)| \nonumber\\ &= \Big| \sumM \intR \Psi_m(A_m W_n(s)) - \Psi_m(A_m \Wt_n(s)) - \Psi_m(A_m \Wo_n(s)) \, {\rm d}s \Big| \nonumber \\
&= \sumM \Big( \int\limits_{-L-\frac{m}{2}}^{-L+\frac{m}{2}} \Psi_m(A_m W_n(s)) \, {\rm d}s + \int\limits_{L-\frac{m}{2}}^{L+\frac{m}{2}} \Psi_m(A_m W_n(s)) \, {\rm d}s \Big) \nonumber \\
&= 2 \sumM \int\limits_{L-\frac{m}{2}}^{L+\frac{m}{2}} \Psi_m(A_m W_n(s)) \, {\rm d}s \leq 2 \sumM \int\limits_{L-\frac{M}{2}}^{L+\frac{M}{2}} \Psi_m(A_m W_n(s)) \, {\rm d}s.
\end{align}
Furthermore, we note that by the unimodality of $A_m W_n$
\begin{align}
\label{eq:A_mW_n1}
    \|A_m W_n \|_2^2 &= \intR \big(A_m W_n (\sigma) \big)^2 \, {\rm d}\sigma \geq  \int_{-|s|}^{|s|} \big(A_m W_n (\sigma) \big)^2 \, {\rm d}\sigma \nonumber \\
    &\geq  \int_{-|s|}^{|s|} \big(A_m W_n (s) \big)^2 \, {\rm d}\sigma = 2|s| \big(A_m W_n(s) \big)^2
\end{align}
holds for all $s \in \Rset$. Hence we have with Young's convolution inequality
\begin{align}
\label{eq:A_mW_n}
0 \leq \big(A_m W_n(s)\big)^2 \leq \frac{\|A_m W_n\|_2^2}{2|s|} \leq \frac{\|A_m\|_1^2 \|W_n\|_2^2}{2|s|} =  \frac{Km^2}{|s|}.
\end{align}
Using \eqref{eq:Psi_m approx Phi''}, \eqref{eq:estimate P_tilde} and \eqref{eq:A_mW_n} we find
\begin{align}
    |\Pt(W_n)& - \Pt(\Wt_n)-\Pt(\Wo_n)| \leq 2 \sumM \int\limits_{L-\frac{M}{2}}^{L+\frac{M}{2}} \frac{\Phi_m''(\nu m - \sqrt{2Km})}{2} \big(A_m W_n(s) \big)^2 \, {\rm d}s \nonumber \\
    &\leq \sumM \Phi_m''(\nu m - \sqrt{2Km})m^2K \int\limits_{L-\frac{M}{2}}^{L+\frac{M}{2}} \frac{1}{s} \, {\rm d}s = C \ln\Big(\frac{L+\frac{M}{2}}{L-\frac{M}{2}}\Big) \leq \epsilon
\end{align}
for any $M$ fixed and $L$ large enough. \\
By Taylor expanding $\Phi_m$ we obtain
\begin{align}
     \Phi_m(\nu m - r) = \Phi_m(\nu m) - \Phi'(\nu m) r + \frac{\Phi_m''(\nu m)}{2}r^2 - \frac{\Phi_m'''(\nu m - \theta r)}{3!}r^3
 \end{align}
for some suitable $\theta \in (0,1)$, which implies with Lemma \ref{le:AmW} and $\Phi'''_m \geq 0$ respective $\Phi_m^{(4)} \leq 0$
\begin{align}
\label{eq:TaylorPhi}
     \Psi_m(\nu m - A_m W(s)) - \frac{\Phi_m''(\nu m)}{2}(A_m W(s))^2 \leq - \frac{\Phi_m'''(\nu m - \sqrt{2Km})}{3!}r^3
 \end{align}
With \eqref{eq:TaylorPhi} we then estimate
\begin{align}
\label{eq:tilde W_o}
    | \widetilde{\Pcal}(\Wo_n) - \widetilde{\Qcal}(\Wo_n) | &=\Big| \sumM \intR \Psi_m(A_m\Wo_n(s) ) - \frac{\Phi_m''(\nu m)}{2} (A_m \Wo_n)(s))^2 \, {\rm d} s \Big| \nonumber \\
    &\leq\Big| \sumM \intR -\frac{\Phi_m'''(\nu m - \sqrt{2Km}  )}{3!} (A_m\Wo_n(s))^3 \, {\rm d} s \Big| \nonumber \\ &= 2 \Big| \sumM \int\limits_{L-\frac{m}{2}}^{\infty} \frac{\Phi_m'''(\nu m - \sqrt{2Km})}{3!} (A_m W_n(s))^3 \, {\rm d} s \Big| \nonumber \\
    &\leq 2 \Big| \sumM \frac{\Phi_m'''(\nu m - \sqrt{2Km})}{3!} (2Km)^{\frac32} \int\limits_{L-\frac{m}{2}}^{\infty} \frac{1}{s^{\frac32}} \, {\rm d} s \Big|
\end{align}
using the fact that $A_m\Wo_n$ is even and supported in $\Rset \setminus[-L+\frac{m}{2}, L-\frac{m}{2}]$. By assumption \eqref{eq:Phi_convergence4}
and the decay of $W_n(s)$ for $|s| \to \infty$  as in \eqref{eq:A_mW_n}, we can estimate
\begin{align}
    | \widetilde{\Pcal}(\Wo_n) - \widetilde{\Qcal}(\Wo_n) | \leq \epsilon
\end{align}
uniformly in $n$ by choosing $L$ in \eqref{eq:tilde W_o} sufficiently large. In summary, we obtain the estimate
\begin{align}
\label{eq:Approx4eps}
    &| \Pcal(W_n) - \Pt(\Wt_n) - \Qt(\Wo_n) | \nonumber\\
    &\leq |\Pcal(W_n) - \Pt(W_n) | + | \Pt(W_n) - \Pt(\Wt_n) - \Pt(\Wo_n) | + |\Pt(\Wo_n) - \Qt(\Wo_n) | \leq 3 \epsilon.
\end{align}
In particular, this implies
\begin{align}
\label{eq:Approx4eps2}
    \Pcal(W_n) \leq \Pt(\Wt_n) + \Qt(\Wo_n) + 3\epsilon.
\end{align}
Moreover, choosing once again $L$ large enough, we estimate
\begin{align}
    | \Qt(\Wo_{\infty}) | \leq \epsilon.
\end{align}
The weak convergence \eqref{eq:weak} $W_n \rightharpoonup W_{\infty}$ implies the pointwise convergence
\begin{align}
    A_m \Wt_n \xrightarrow{n\to\infty} A_m \Wt_{\infty}.
\end{align}
We already noted that $A_m\Wt_n$ is supported in $[-L-\frac{m}{2}, L+\frac{m}{2}]$ and bounded by $\sqrt{2Km}$, thus  strong convergence
\begin{align}
\label{eq:strong A_m Wt}
    A_m \Wt_n \xrightarrow{n \to \infty} A_m \Wt_{\infty}
\end{align}
follows in $\mathsf{L}^2$ by Lebesgue's dominated convergence theorem. With \eqref{eq:strong A_m Wt} we find
\begin{align}
\label{eq:Pt(Wt_n)}
    \Pt(\Wt_n) \leq \Pt(\Wt_{\infty}) + \epsilon.
\end{align}
In addition, since $(W_n)_{n\in \Nset}$ is a maximizing sequence, by choosing $n$ sufficiently large, we can ensure
\begin{align}
\label{eq:Pcal(W_n)}
    \Pcal(W_n) \geq P(K) - \epsilon.
\end{align}
With \eqref{eq:Approx4eps2}, \eqref{eq:Pt(Wt_n)} and \eqref{eq:Pcal(W_n)} we then estimate
\begin{align}
\label{eq:Approx P(K)}
    P(K) &\leq \Pcal(W_n) + \epsilon \leq \Pt(\Wt_n) + \Qt(\Wo_n) + 4\epsilon \nonumber \\
    &\leq \Pt(\Wt_{\infty}) + \Qt(\Wo_n) + 5 \epsilon \leq \Pcal(\Wt_{\infty})+\Qcal(\Wo_n)+5\epsilon.
\end{align}
Using that $\Pcal$ is super-quadratic, we find
\begin{align}
\label{eq:Approx Pcal(Wt)}
    \Pcal(\Wt_{\infty}) \leq \Pcal \Big( \frac{\sqrt{2K}}{\|\Wt_{\infty}\|_2} \Wt_{\infty} \Big) \frac{\|\Wt_{\infty}\|_2^2}{2K} \leq P(K) \frac{\|\Wt_{\infty}\|_2^2}{2K}.
\end{align}
Moreover,
\begin{align}
    \Qcal(\lambda W) = \suminf \intR \frac{\Phi_m''(\nu m)}{2} \big( A_m \lambda W (s) \big)^2 = \lambda^2 \Qcal(W)
\end{align}
holds by definition and thus,
\begin{align}
\label{eq:Approx Qcal(Wo)}
    \Qcal(\Wo_n) = \Qcal \Big( \frac{\sqrt{2K}}{\|\Wo_n\|_2} \Wo_n \Big) \frac{\|\Wo_n\|_2^2}{2K}  \leq  Q(K) \frac{\|\Wo_n\|_2^2}{2K}.
\end{align}
With \eqref{eq:Approx P(K)}, \eqref{eq:Approx Pcal(Wt)} and \eqref{eq:Approx Qcal(Wo)} it follows
\begin{align}
\label{eq:P(K) 6 eps}
    P(K) \leq P(K) \frac{\|\Wt_{\infty}\|_2^2}{2K} + Q(K) \frac{\|\Wo_n\|_2^2}{2K} + 5\epsilon.
\end{align}
Substituting $Q(K) = P(K) - (P(K) - Q(K))$ and rearranging the terms in the last inequality implies
\begin{align}
\label{eq:P(K) 6 eps 2}
    P(K) + \frac{\|\Wo_n\|_2^2}{2K} (P(K) - Q(K) ) \leq \frac{\|\Wt_{\infty}\|_2^2 + \|\Wo_n\|_2^2}{2K} P(K) + 6\epsilon.
\end{align}
The weak convergence $W_n \rightharpoonup W_{\infty}$ implies the weak convergence $\Wt_n \rightharpoonup \Wt_{\infty}$. This yields
\begin{align}
   \liminf\limits_{n\to\infty} \|\Wt_n\|_2 \geq \|\Wt_{\infty}\|_2
\end{align}
and hence for any $\tilde{\epsilon}>0$
\begin{align}
    \|\Wt_n\|_2 \geq \|\Wt_{\infty}\|_2 (1-\tilde{\epsilon})
\end{align}
for sufficiently large $n \in \Nset$. Together with \eqref{eq:Wt + Wo} it follows
\begin{align}
\label{eq:1+eps}
    \|\Wt_{\infty}\|_2^2 + \|\Wo_n\|_2^2 \leq \frac{\|\Wt_n\|_2^2}{(1-\tilde{\epsilon})^2} + \|\Wo_n\|_2^2 = \frac{2K - (1-(1-\tilde{\epsilon})^2) \|\Wo_n\|_2^2}{(1-\tilde{\epsilon})^2} \leq 2K (1+\epsilon)
\end{align}
for $n$ large enough and with the substitution $\epsilon := \frac{1-(1-\tilde{\epsilon})^2}{(1-\tilde{\epsilon})^2}$. Combining the results from \eqref{eq:P(K) 6 eps 2} and \eqref{eq:1+eps} we find
\begin{align}
    \|\Wo_n\|_2^2 \leq \frac{2K ( P(K) + 5)}{P(K)-Q(K)}\epsilon = C \epsilon.
\end{align}
Inserting this into \eqref{eq:P(K) 6 eps} we find the approximation
\begin{align}
    P(K) \leq P(K) \frac{\|\Wt_{\infty}\|_2^2}{2K} + \tilde{C} \epsilon
\end{align}
which implies
\begin{align}
    \|\Wt_{\infty}\|_2^2 \geq 2K
\end{align}
by choosing $\epsilon > 0$ sufficiently small. We conclude the proof with the observation
\begin{align}
    \|W_{\infty}\|_2^2 \geq \|\Wt_{\infty}\|_2^2.
\end{align}
\end{proof}

\subsection{Proof of Main Theorem}
\label{sub:2.3}

We are now in the position to complete the proof of our main result.

\begin{proof}[Proof of Theorem \ref{THM}]
Proposition \ref{le:strong convergence} implies that there exists a maximizer $W \in \Ccal_K$ of $\Pcal$ that can be constructed as the limit point of a maximizing sequence. According to Lemma \ref{le:T}, this maximizer is a solution to \eqref{eq:wave_eq}.
\end{proof}

We end this section by stating some properties for the wave speed of such solutions.

\begin{lemma}
\label{eq:estimate c}
Let $(W_K, c_K)$ be a solution provided by Theorem \ref{THM}. Then
\begin{align}
     c_K = \mu(W_K)^{-\frac12} \quad \text{and} \quad
  2  c_K^2 \geq \frac{\Pcal(W_K)}{\Kcal(W_K)} = \frac{P(K)}{K}
\end{align}
hold.
\end{lemma}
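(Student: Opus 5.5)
The first identity is immediate from Lemma \ref{le:T}: the solution $W_K$ is a maximizer of $\Pcal$ on $\CK$, hence $\Pcal(\Tcal(W_K)) \le P(K) = \Pcal(W_K)$. Combined with $\Pcal(\Tcal(W_K))\ge \Pcal(W_K)$ this gives equality, so $W_K = \Tcal(W_K)$. Lemma \ref{le:T} then states that $W_K$ solves \eqref{eq:wave_eq} with $c_K = \mu(W_K)^{-1/2}$. The equality $\Pcal(W_K)/\Kcal(W_K) = P(K)/K$ holds by construction, since $\Kcal(W_K)=K$ and $\Pcal(W_K)=P(K)$.

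For the inequality, I test the fixed point equation $W_K = \mu(W_K)\,\partial\Pcal(W_K)$ against $W_K$:
\begin{align*}
2K = \|W_K\|_2^2 = \mu(W_K)\,\langle \partial\Pcal(W_K), W_K\rangle = c_K^{-2} \suminf \intR \Psi_m'(A_m W_K)\,A_m W_K \,{\rm d}s .
\end{align*}
This uses the self-adjointness of $A_m$ (it is convolution with an even kernel) and the interchange of sum and integral, justified by the bounds in the well-definedness lemma for $\partial\Pcal$. It therefore suffices to show $\Psi_m'(r)\,r \ge 2\Psi_m(r)$ for $r\in[0,\nu m)$. Then
\begin{align*}
c_K^2 \cdot 2K \ge 2\Pcal(W_K), \qquad\text{i.e.}\qquad c_K^2 \ge \frac{\Pcal(W_K)}{K}.
\end{align*}
This is even stronger than the claimed $2c_K^2\ge P(K)/K$, so the stated bound follows a fortiori.

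The key step is the pointwise superquadratic inequality. Set $g(r) := r\Psi_m'(r) - 2\Psi_m(r)$. Then $g(0)=0$ and $g'(r) = r\Psi_m''(r) - \Psi_m'(r)$. Moreover $\Psi_m'(r) = \int_0^r \Psi_m''(t)\,{\rm d}t \le r\Psi_m''(r)$, because $\Psi_m''(t) = \Phi_m''(\nu m - t)$ is nondecreasing in $t$; this follows from $\Phi_m'''\le 0$. Hence $g'\ge 0$, so $g\ge 0$. If one only wanted the weaker factor, the bound $r\Psi_m'(r)\ge \Psi_m(r)$ from convexity with $\Psi_m(0)=0$ already suffices.

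I expect the only real obstacle to be bookkeeping. One must justify exchanging the sum over $m$ with the integral in $\langle\partial\Pcal(W_K),W_K\rangle$, and this follows from the summable bounds \eqref{eq:Phi_convergence2} exactly as in Lemma \ref{le: P_well_def}. One must also confirm $\mu(W_K)>0$, which holds because $\partial\Pcal(W_K)\neq 0$: its $m=1$ term is positive wherever $A_1W_K>0$, by the strict assumptions on $\Phi_1$.
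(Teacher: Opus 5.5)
Your proof is correct. It has the same skeleton as the paper's: you obtain $W_K=\Tcal(W_K)$ from maximality and Lemma~\ref{le:T}, then test the fixed-point equation with $W_K$. The difference is the key inequality.

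\textbf{The paper's route.} The paper bounds $\langle\partial\Pcal(W_K),W_K\rangle$ from below by evaluating the convexity inequality \eqref{eqn:Pconvex} at $V=0$. This is the integrated form of the pointwise bound $r\Psi_m'(r)\ge\Psi_m(r)$ that you mention as the fallback. It uses only $\Phi_m''\ge 0$ and gives exactly $c_K^2\ge \Pcal(W_K)/(2K)$, hence the stated factor $2$.

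\textbf{Your route.} You use the superquadratic inequality $r\Psi_m'(r)\ge 2\Psi_m(r)$. This additionally needs the standing hypothesis $\Phi_m'''\le 0$, so that $\Psi_m''$ is nondecreasing. It gives the sharper bound $c_K^2\ge P(K)/K$.

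\textbf{What the sharper constant buys.} Combined with Lemmas~\ref{le:Q(K)} and~\ref{le:P(K)-Q(K)}, it gives $c_K^2> Q(K)/K=\sum_{m\ge1}\Phi_m''(\nu m)\,m^2$. The right-hand side is the squared long-wave (sound) speed of the lattice linearized at $x_j=\nu j$, so the constructed waves are supersonic. The paper's factor-$2$ bound only yields $c_K^2>\tfrac12\sum_{m}\Phi_m''(\nu m)m^2$ and does not reach this conclusion.

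Your check that $\partial\Pcal(W_K)\neq 0$, via the strict positivity of the $m=1$ term, also settles a point the paper leaves implicit: that $\mu(W_K)$ is finite and positive.
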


\begin{proof} Using that $W_K$ is a maximizer, the first statement follows directly from Lemma \ref{le:T}. To show the other one, we test the equation
\begin{align}
    W_K =
\Tcal(W_K) = \mu(W_K)\partial\Pcal(W_K)
\end{align}
with $W_K$. This yields
\begin{align}
    c_K^2= \frac{1}{\mu(W_K)}= \frac{\langle \partial\Pcal(W_K), W_K \rangle}{\langle W_K, W_K \rangle} \geq
    \frac{\Pcal(W_K)}{2\Kcal(W_K)} = \frac{P(K)}{2K}
\end{align}
after evaluating \eqref{eqn:Pconvex} with $V=0$ and $W=W_K$.
\end{proof}

\section{High-Energy Limit} \label{sec:HE}

In this section we study the case where the kinetic energy
\begin{align}
\Kcal(W) = \frac12 \intR \big( W(s) \big) ^2 \, {\rm d}s
\end{align}
of the traveling wave profile $W$ converges to the high-energy limit $K_{\text{max}} := \nu^2/2$. A very detailed analysis of these  asymptotics for nearest neighbor FPUT is given in \cite{HM15}. The high-energy limit is dominated by the nearest neighbor interaction in our setting too, hence we expect that we could adapt the detailed analysis to the solution constructed in Theorem \ref{THM}.  A simpler analysis can still describe the limit behavior.  For that, we again adapt the ideas of \cite{Herrmann2020}. \\
For simplifications, we restrict our considerations to the special case
\begin{align}
\Phi_m(r) = r^{-\alpha} \label{eqn:PhimhiE}
\end{align}
for fixed $\alpha > \frac32$ and all $m \in \Nset$. Indeed, we only require the specific  singularity for $m=1$. The simplification implies
\begin{align}
\label{Psi_1}
\Psi_m(r) = (\nu m - r)^{-\alpha} - (\nu m)^{-\alpha} - \alpha (\nu m)^{-\alpha-1} r.
\end{align}
To start, let $0 < \delta < 1$ be a fixed value and consider a maximizing function $W_{\delta} \in \Ccal$ with
\begin{align}
\label{high energies}
\Kcal(W_{\delta}) = \big(1-\delta\big) \frac{\nu^2}{2}, \qquad \Pcal(W_{\delta}) = P\Big(\big(1-\delta\big)\frac{\nu^2}{2}\Big).
\end{align}
The first equation in \eqref{high energies} ensures that the kinetic energy $\Kcal(W_{\delta})$ tends to $K_{\text{max}}$ for $\delta \to 0$. The second equation means that we choose $W_{\delta}$ to be a maximizer  as we proved in the previous section, which might not be unique. 
It is our goal to analyze the limit $W_{\delta}$ for $\delta \to 0$. For this, we define the function
\begin{align}\label{eqn:W0}
W_0(x) := \nu \chi_{[-1/2, 1/2]}(x) = \begin{cases}
\nu, & x \in [-\frac12, \frac12], \\
0, & x \notin [-\frac12, \frac12].
\end{cases}
\end{align}

We will show that $W_0$ is indeed the correct candidate for the limit of $W_{\delta}$. As a first indication that this could be true, note the following lemma.

\begin{lemma}
The function $W_0$ as in \eqref{eqn:W0} satisfies $\Kcal(W_0) = K_{\text{max}}$.
\end{lemma}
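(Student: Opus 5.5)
This is a direct computation from the definition of $\Kcal$ and of $W_0$ in \eqref{eqn:W0}; no earlier result is needed beyond these definitions. Since $W_0$ equals the constant $\nu$ on the interval $[-\frac12,\frac12]$ of length one and vanishes outside it, the integrand $W_0^2$ equals $\nu^2$ on that interval and $0$ elsewhere. Hence
\begin{align}
\Kcal(W_0) = \frac12 \intR \big(W_0(s)\big)^2 \, {\rm d}s = \frac12 \int\limits_{-\frac12}^{\frac12} \nu^2 \, {\rm d}s = \frac{\nu^2}{2} = K_{\text{max}}.
\end{align}

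For the later use of $W_0$ as a candidate limit of $W_\delta$, it is also worth recording that $W_0 \in \Ccal$. It is nonnegative, even, nonincreasing on $[0,\infty)$, and it is the $\mathsf{L}^2$ limit of smooth mollified even unimodal bumps with compact support. The lemma itself, however, only asserts the energy identity, so the proof reduces to the one-line calculation above.

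There is no real obstacle. The only point requiring care is to keep the factor $\frac12$ in $\Kcal$ and the unit length of the support consistent, so that the result matches $K_{\text{max}} = \nu^2/2$ exactly.
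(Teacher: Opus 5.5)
Your proof is correct and is exactly the direct computation the paper has in mind. The paper only says the result follows from a simple direct computation, namely $\frac12\int_{-1/2}^{1/2}\nu^2\,{\rm d}s=\nu^2/2$. Your side remark that $W_0$ belongs to the cone is also correct, but the lemma does not need it.
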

\begin{proof}
This is a simple direct computation.
\end{proof}

Before formulating the convergence result, we also define the auxiliary variables
\begin{align}
U_{\delta, m} :=   A_m W_{\delta}  , \qquad \epsilon_{\delta, m} := \nu m - U_{\delta, m}(0)
\end{align}
for all $m \in \Nset$. The functions $U_{\delta, m}$ are unimodal by construction and thus the parameters $\epsilon_{\delta, m}$ denote the maximal distances between $U_{\delta, m}$ and the arguments of the singularities of $\Psi_m$, namely $\nu m$. \\

What follows is the main result of this section.

\begin{theorem}
We have $W_{\delta} \xrightarrow{\delta \to 0} W_0$ strongly in $\mathsf{L}^2(\Rset)$ and $c_{\delta}^2 \xrightarrow{\delta \to 0} \infty$. Here, $c_{\delta}$ denotes the wave speed of $W_{\delta}$.
\end{theorem}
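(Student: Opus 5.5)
The plan is to show first that $P\big((1-\delta)\tfrac{\nu^2}{2}\big)\to\infty$ as $\delta\to 0$. This gives $c_\delta^2\to\infty$ immediately by Lemma \ref{eq:estimate c}. The blow-up of $\Pcal(W_\delta)$ is then used to force $\epsilon_{\delta,1}\to 0$, and an elementary Hilbert space identity converts this into $W_\delta\to W_0$ in $\mathsf{L}^2$.

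\emph{Step 1: lower bound.} Write $K_\delta=(1-\delta)\nu^2/2$ and use the test function $V_\delta:=\sqrt{1-\delta}\,W_0\in\Ccal_{K_\delta}$. A direct computation gives $A_1V_\delta(s)=\nu\sqrt{1-\delta}\,(1-|s|)_+$. Since all $\Psi_m\ge 0$ by Lemma \ref{le:psi_aux}, we may drop the terms $m\ge 2$ and obtain
\begin{align*}
P(K_\delta)\ge \Pcal(V_\delta)\ge \int_{-1}^{1}\Psi_1\big(\nu\sqrt{1-\delta}(1-|s|)\big)\,{\rm d}s .
\end{align*}
With $\eta_\delta:=\nu(1-\sqrt{1-\delta})\to 0$, the singular part $(\nu-\nu\sqrt{1-\delta}(1-|s|))^{-\alpha}$ integrates to a quantity of order $\eta_\delta^{1-\alpha}$ near $s=0$. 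The linear and constant parts of \eqref{Psi_1} stay bounded. Because $\alpha>3/2>1$, this yields $P(K_\delta)\to\infty$, and hence $c_\delta^2\ge P(K_\delta)/(2K_\delta)\to\infty$.

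\emph{Step 2: upper bound in terms of $\epsilon_{\delta,1}$.} The aim is to show that $\Pcal(W_\delta)$ stays bounded as long as $\epsilon_{\delta,1}$ stays bounded below.
\begin{itemize}
\item For $m\ge 2$, Lemma \ref{le:AmW} gives $A_mW_\delta\le \sqrt{2K_\delta m}\le \nu\sqrt m$. This stays a distance $\nu(m-\sqrt m)\ge \nu(2-\sqrt2)>0$ away from the singularity $\nu m$, uniformly in $\delta$. The estimate of Lemma \ref{le: P_well_def}, with $\nu m-\sqrt{2Km}$ replaced by $\nu m-\nu\sqrt m$, then bounds $\sum_{m\ge2}\int\Psi_m(A_mW_\delta)$ uniformly. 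This uses that the corresponding series converges for $r^{-\alpha}$, which I expect to be checked as in Lemma \ref{lem:ralpha}.
\item For $m=1$, unimodality gives $0\le U_{\delta,1}\le \nu-\epsilon_{\delta,1}$. The Taylor argument of \eqref{eq:Psi_m positive} gives $\Psi_1(r)\le \tfrac12\Phi_1''(\epsilon_{\delta,1})r^2$ on this range. Therefore
\begin{align*}
\int\Psi_1(U_{\delta,1})\le \tfrac12\,\Phi_1''(\epsilon_{\delta,1})\,\|A_1W_\delta\|_2^2\le \Phi_1''(\epsilon_{\delta,1})\,K_\delta .
\end{align*}
\end{itemize}
Since $\Pcal(W_\delta)=P(K_\delta)\to\infty$, we must have $\Phi_1''(\epsilon_{\delta,1})\to\infty$, that is, $\epsilon_{\delta,1}\to0$.

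\emph{Step 3: strong convergence.} Step 2 says $\int_{-1/2}^{1/2}W_\delta=U_{\delta,1}(0)\to\nu$. Expanding the square,
\begin{align*}
\|W_\delta-W_0\|_2^2=\|W_\delta\|_2^2-2\nu\int_{-1/2}^{1/2}W_\delta\,{\rm d}s+\nu^2\longrightarrow \nu^2-2\nu^2+\nu^2=0 ,
\end{align*}
which proves the claimed $\mathsf{L}^2$ convergence.

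The main obstacle should be the uniform control of the tail $m\ge2$ in Step 2. One has to verify that the summability assumptions, or their analogue with $\sqrt{2Km}$ replaced by $\nu\sqrt m$, hold for $r^{-\alpha}$ uniformly as $K\uparrow\nu^2/2$. Everything else is a direct computation.
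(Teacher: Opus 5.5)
Your proposal is correct and follows the paper's proof. The test function $\sqrt{1-\delta}\,W_0$ forces $P(K_\delta)\to\infty$, and a $\delta$-uniform bound on the terms $m\ge 2$ together with control of the $m=1$ term in terms of $\epsilon_{\delta,1}$ gives $\epsilon_{\delta,1}\to 0$; the identity $\|W_\delta-W_0\|_2^2=2\nu\epsilon_{\delta,1}-\delta\nu^2$ and the bound $c_\delta^2\ge P(K_\delta)/(2K_\delta)$ then finish the proof.

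Your write-up is in fact more careful than the paper's display in two respects. You locate the divergence correctly at $s=0$, where $\nu-A_1V_\delta(s)=\eta_\delta+\nu\sqrt{1-\delta}\,|s|$; the paper's chain instead integrates over $[1,\infty)$, where $A_1W_0\equiv 0$ and $\Psi_1(0)=0$. You also replace the contradiction argument by the explicit bound $\int\Psi_1(U_{\delta,1})\le \Phi_1''(\epsilon_{\delta,1})K_\delta$. The tail estimate you flag as the main obstacle is immediate, since $m^2\Phi_m''(\nu m-\nu\sqrt m)\lesssim m^{-\alpha}$ for $m\ge 2$.
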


\begin{proof}
\underline{Auxiliary results:} At first, we note that by Lemma
\ref{le:AmW}
\begin{align}
\label{eqn:epsm}
\epsilon_{\delta, m}\geq \nu(m -\sqrt{m}),
\end{align}
which is uniformly bounded away from $0$ for $m\geq 2$.

We want to show the convergence $\epsilon_{\delta, 1} \xrightarrow{\delta \to 0} 0$. For that we first notice that
\begin{align}
\label{kinetic of testfunction}
\Kcal\big(\sqrt{1-\delta}\, W_0\big) = \big(1-\delta\big) \frac{\nu^2}{2} = \Kcal(W_{\delta}) =: \Kcal_{\delta}
\end{align}
holds by construction and
\begin{align}
\label{computing A_1 W_0}
A_1 W_0 (s) = \int\limits_{-\frac12}^{\frac12} \nu \, \chi_{[-1/2, 1/2]} (s + \sigma) \, {\rm d} \sigma = \begin{cases}
(s+1)\nu, & -1 \leq s \leq 0, \\
(1-s)\nu, & 0 < s \leq 1, \\
0, & s \notin [-1, 1]
\end{cases}
\end{align}
is a tent-shaped function with support in $[-1, 1]$. \\
Using that $W_{\delta}$ is a maximizer of the potential energy along all functions with a kinetic energy of $K_{\delta}$, \eqref{kinetic of testfunction} and \eqref{computing A_1 W_0}, it follows
\begin{align}
\label{P_infinite}
\Pcal(W_{\delta}) &\geq \Pcal(\sqrt{1-\delta}\, W_0) \geq \int_1^{\infty} \Psi_1 \big (A_1 \sqrt{1-\delta} \, W_0 (s) \big) \, {\rm d}s \\ &\xrightarrow{\delta \to 0} \int_1^{\infty}  \Psi_1 \big (A_1 \, W_0 (s) \big) \, {\rm d}s \nonumber
\geq \int_1^{\infty}  \big (\nu - A_1 W_0(s) \big)^{-\alpha} \, {\rm d}s = \int_1^{\infty} \nu^{-\alpha} = \infty.
\end{align}
Suppose for the moment that $\epsilon_{\delta, 1}$ does not converge to zero. 
Then there exists a constant $\epsilon_{0,1} \in (0,\nu)$ such that $\epsilon_{\delta, 1} \geq \epsilon_{0,1}$ applies to a fixed sequence $(\epsilon_{\delta, 1})_{\delta}$ for $\delta \to 0$. With the definition and unimodality of $U_{\delta, 1}$ and the definition of $\epsilon_{\delta, 1}$ it follows that
\begin{align}
    \| A_1 W_{\delta}\|_{\infty} = A_1 W_{\delta}(0) = U_{\delta, 1}(0) = \nu - \epsilon_{\delta,1} \leq \nu - \epsilon_{0,1}
\end{align}
holds. This implies a strictly positive distance between $A_1 W_{\delta}$ and the argument $\nu$ of the singularity of $\Psi_1$ for $\delta \to 0$. Due to \eqref{eqn:epsm}, this distance result between $A_m W_{\delta}$ and its corresponding singularity $\nu m$ holds for all $m \in \Nset$ and convergence for $\Phi_m$ as in \eqref{eqn:PhimhiE} follows by inspection. With the arguments in the proof of Lemma \ref{le: P_well_def}, it follows that the limit $\lim\limits_{\delta \to 0} \Pcal(W_{\delta})$ is finite. This is a contradiction to \eqref{P_infinite} and therefore we conclude $\epsilon_{\delta, 1} \xrightarrow{\delta \to 0} 0$. 
\\
\underline{Proof of the statement:} With
\begin{align}
U_{\delta, 1}(0) = (A_1 W_{\delta})(0) = \int\limits_{-\frac{1}{2}}^{\frac{1}{2}} W_{\delta}(x) \, {\rm d}x
= \intR \chi_{[-1/2, 1/2]}(x) W_{\delta}(x) \, {\rm d}x = \langle W_{\delta}, \chi_{[-1/2, 1/2]} \rangle
\end{align}
we observe
\begin{align}
\langle W_{\delta}, W_0 \rangle = \nu  \langle W_{\delta}, \chi_{[-1/2, 1/2]} \rangle = \nu U_{\delta, 1}(0).
\end{align}
With $\epsilon_{\delta,1} \to 0$ it follows
\begin{align}
\|W_{\delta} - W_0 \|_2^2 &= \|W_{\delta}\|_2^2 + \|W_0\|_2^2 - 2 \langle W_{\delta}, W_0 \rangle = 2 \Kcal_{\delta} + 2 K_{\text{max}} - 2 \nu U_{\delta, 1}(0) \nonumber \\
&= (1-\delta)\nu^2 + \nu^2 - 2 \nu^2 + 2\epsilon_{\delta,1} \nu = 2\epsilon_{\delta,1}\nu - \delta \nu^2 \xrightarrow{\delta \to 0} 0.
\end{align}
Finally, the statement
\begin{align}
    c^2_{\delta} \geq \frac{\Pcal(W_{\delta})}{2 \Kcal(W_{\delta})} \xrightarrow{\delta \to 0} \infty
\end{align}
is a direct consequence of Lemma \ref{eq:estimate c} and the divergence of $\Pcal(W_{\delta})$.
\end{proof}

\bibliographystyle{alpha}

\end{document}